\theoremstyle{plain}
\newtheorem{thm}{Theorem}[section]
\newtheorem{cor}{Corollary}
\newtheorem{lem}{Lemma}
\newtheorem*{thm*}{Theorem}
\newtheorem*{lem*}{Lemma}
\newtheorem*{cor*}{Corollary}
\newtheorem*{rem*}{Remark}
\newtheorem{prop}[thm]{Proposition}
\newtheorem*{defn*}{Definition}
\theoremstyle{definition}
\newtheorem*{hypchi}{Hypothesis $\mathscr{H}_{\alpha, \chi}$}
\newtheorem*{hypdis}{Hypothesis $\mathscr{D}$}
\newcommand{\be}{\begin{equation}}
\newcommand{\ee}{\end{equation}}
\renewcommand{\a}{\alpha}
\newcommand{\s}{\sigma}
\renewcommand{\b}{\beta}
\renewcommand{\r}{\rho }
\newcommand{\g}{\gamma }
\renewcommand{\d}{\delta}   
\newcommand{\sgn}{\operatorname{sgn}}
\newcommand{\var}{\operatorname{var}}
\newcommand*{\bigs}[1]{\scalebox{1.2}{\ensuremath#1}}
\newcommand*{\bigss}[1]{{\hbox{$\left#1\vbox to11\p@{}\right.\n@space$}}}
\newcommand{\sdfrac}[2]{\mbox{\small$\displaystyle\frac{#1}{#2}$}}
\title[A Central Limit Theorem for Linear Combinations]
{A Central Limit Theorem  for Linear Combinations  \\ of Logarithms of Dirichlet $L$-functions}
\author{Fatma \c{C}\.{i}\c{c}ek}
\begin{document}

\maketitle

\begin{abstract}
The purpose of this paper is to generalize our earlier work on the logarithm of the Riemann zeta-function to linear combinations of logarithms of primitive Dirichlet $L$-functions with constant real coefficients. Under the assumption of suitable hypotheses, we prove that as $T\to \infty $, the sequence $(a_1\log|L(\rho,\chi_n)|+\dots+a_1\log|L(\rho,\chi_n)|)$ has an approximate Gaussian distribution with mean $0$ and variance $ \tfrac12 \big({a_1}^2+\dots+{a_n}^2\big)\log\log T.$ Here $a_1, \dots, a_n \in \mathbb{R}$, $0< \Im\rho \leq T$ where $\rho$ runs over nontrivial zeros of the zeta-function, and each of the $\chi_j$ is a primitive Dirichlet character with conductor $M_j$ for $M_j\leq T.$ From the proof of this result, we also derive the independence of the distributions of sequences $(\log|L(\rho,\chi_1)|), \dots,$ $(\log|L(\rho,\chi_n)|)$ provided that they are suitably normalized.
\end{abstract}

\section{Introduction}\label{intro}

Let $L(s, \chi)$ be a Dirichlet $L$-function corresponding to a primitive Dirichlet character. The Generalized Riemann hypothesis (GRH) claims that all nontrivial zeros of Dirichlet $L$-functions lie on the critical line. This implies that both a Dirichlet $L$-function $L(s, \chi)$ and the Riemann zeta-function $\zeta(s)$ have their nontrivial zeros on the same line, that is, $\Re s=\tfrac12.$ It is further conjectured that such zeros never coincide.

\begin{hypdis}
The nontrivial zeros of primitive Dirichlet $L$-functions never coincide with the zeros of the Riemann zeta-function. 
\end{hypdis}
Note that this hypothesis is an implication of the well-known Grand Simplicity Hypothesis~\cite{RS}. 

On the assumption of the GRH and Hypothesis $\mathscr{D}$, it is of interest to study the distribution of the sequence $(\log|L(\r, \chi)|)$, where $\displaystyle \r=\tfrac12+i\g$ runs over the nontrivial zeros of the zeta-function. Due to its similarity, we will consider a more general case and study the distribution of sequences that are given as
    \[
    \mathcal{L}(\r)=a_1 \log|L(\r, \chi_1)|+ \dots+a_n \log|L(\r, \chi_n)|, \quad \text{where} \quad a_1, \dots, a_n \in \mathbb R. 
    \]
As a leading mark to this investigation, it was previously shown by the author~\cite{log zeta} that for $0<\g \leq T$ and $z \neq 0$ sufficiently small, the sequence $(\log|\zeta(\rho+z)|)$ has an approximate Gaussian distribution with a mean that depends on $\rho$ and $z$ and variance $\tfrac12 \log\log T$ as $T\to \infty$. This assumed the Riemann hypothesis and the famous pair correlation conjecture of Montgomery~\cite{Montgomery}. As we will see, in addition to the GRH, a similar result for the sequence $(\mathcal{L}(\r))$ requires a zero-spacing hypothesis that is about the vertical distribution of zeros of the $L$-function $L(s, \chi_j)$ relative to the vertical distribution of zeros $\rho$ of $\zeta(s)$ for each $1\leq j \leq n$.

Let $0<\a \leq 1$ be fixed, and $\r_{\chi}=\tfrac12+i\g_\chi$ denote a generic nontrivial zero of $L(s, \chi)$. Also, let $N(T)$ denote the number of zeros $\rho=\frac12+i\g$ of the Riemann zeta-function with $0<\g \leq T$.

\begin{hypchi}\label{hypothesis chi}
Uniformly for  $0<C<1$, we have
        \[
        \limsup_{T\to\infty}\frac{1}{N(T)}
        \#\bigg\{0 < \g \leq T: \text{There exists a }\,  \r_\chi \text{ such that } \bigs|\g_\chi-\g\bigs| \leq \frac{C}{\log T}\bigg\} \ll C^\alpha.
        \]
\end{hypchi}

This is analogous to Hypothesis $\mathscr{H}_\a$, which was introduced in~\cite{H} to study the distribution of the sequence $(\log|\zeta'(\r)|)$ as $0 < \Im \rho \leq T$ on the assumption of a zero-spacing hypothesis that is weaker than the pair correlation conjecture. 

Our first result is the following.

\begin{thm}\label{distr of linear combination}
Let $T$ be sufficiently large. Set
\[
\mathcal{L}(\r)=a_1\log|L(\r, \chi_1)|+\dots+a_n\log|L(\r, \chi_n)|, \quad \text{where} \quad a_1, a_2, \dots, a_n \in \mathbb R,
\]
and $\chi_1,\dots, \chi_n$ are distinct primitive Dirichlet characters with conductors bounded by $T.$ Assume the Generalized Riemann hypothesis and Hypothesis $\mathscr{D}.$ Further, suppose that for each $1\leq j \leq n$, Hypothesis $\mathscr{H}_{\a,\chi_j}$ is true for some $\a\in(0,1].$ For $A<B$, we have
\begin{align*}
\frac{1}{N(T)}\#\bigg\{0< \g \leq T:  \frac{\mathcal{L}(\rho)}{\sqrt{\left(\tfrac{1}{2}\big( {a_1}^2+\dots+{a_n}^2\big)\right) \log\log T}} &\in [A, B]\bigg\} \\
=\frac{1}{\sqrt{2\pi}} &\int_A^B e^{-\tfrac{x^2}{2}} \mathop{dx}  
+O\bigg(\sdfrac{(\log\log\log T)^2}{\sqrt{\log\log T}}\bigg).
\end{align*}
\end{thm}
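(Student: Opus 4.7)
The approach is the method of moments, following the template of the author's earlier work~\cite{log zeta} on $\log|\zeta(\rho+z)|$. One aims to show that for every integer $m$ in a range growing slowly with $T$,
\[
\frac{1}{N(T)}\sum_{0<\gamma\le T}\mathcal{L}(\rho)^{2m} \;=\; \frac{(2m)!}{2^{m}\, m!}\Big(\tfrac12(a_1^2+\dots+a_n^2)\log\log T\Big)^{m} + (\text{admissible error}),
\]
and that the corresponding odd moments are negligible. Since the Gaussian distribution is determined by its moments, a standard truncation argument applied over a range of $m$ growing like a power of $\sqrt{\log\log T}/\log\log\log T$ converts these moment estimates into the quantitative distribution statement with the stated rate $O((\log\log\log T)^2/\sqrt{\log\log T})$.

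The first main step is to replace each $\log|L(\rho,\chi_j)|$ by a short Dirichlet polynomial. Using a Selberg-type identity for $\log L(s,\chi_j)$, valid under GRH and Hypothesis $\mathscr{D}$ (which guarantees $L(\rho,\chi_j)\neq 0$), one obtains
\[
\log|L(\rho,\chi_j)| \;=\; \Re \sum_{p^{k}\le x} \frac{\chi_j(p)^{k}}{k\, p^{k\rho}} + E_j(\rho,x),
\]
for $x$ a small power of $T$ (say $x=T^{1/\log\log T}$). The error $E_j(\rho,x)$ is dominated by a sum over zeros $\rho_{\chi_j}$ of $L(s,\chi_j)$ lying within $O(1/\log T)$ of $\rho$; Hypothesis $\mathscr{H}_{\alpha,\chi_j}$ bounds the proportion of $\gamma$ admitting such a nearby $L$-zero by $\ll C^{\alpha}$, so via a maximal-function estimate $E_j$ is negligible outside a thin exceptional set of $\gamma$. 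Combining the $n$ approximations with the weights $a_j$ yields $\mathcal{L}(\rho)=\mathcal{P}(\rho)+E(\rho)$, where $\mathcal{P}$ is a weighted prime sum and $E$ is controllable in every moment that will be needed.

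Expanding $\mathcal{P}(\rho)^{2m}$ and averaging over $\gamma$ reduces the problem to inner sums of the form $\sum_{0<\gamma\le T}(p_1\cdots p_r/q_1\cdots q_s)^{\pm i\gamma}$, which by a Landau-type explicit formula are negligible unless the numerator and denominator coincide, in which case they contribute $N(T)$ plus lower order. For a single character $\chi_j$ the surviving diagonal pairs give, via $\sum_{p\le x}1/p = \log\log x+O(1)$, the variance contribution $\tfrac12 a_j^{2}\log\log T$; for $i\ne j$ the cross term reduces to $\sum_{p\le x}\chi_i(p)\overline{\chi_j}(p)/p$, and since $\chi_i\overline{\chi_j}$ induces a non-principal character modulo $\mathrm{lcm}(M_i,M_j)$, this sum is $O(1)$ under GRH rather than of size $\log\log T$. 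Hence the cross-character terms do not contribute at leading order, which simultaneously yields the variance $\tfrac12(a_1^2+\dots+a_n^2)\log\log T$ and the asymptotic independence of the individual sequences $(\log|L(\rho,\chi_j)|)$ advertised in the abstract.

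The principal technical obstacle is making the approximation in the second paragraph effective uniformly for moments of order $2m$ with $m$ tending to infinity with $T$: one needs mean-square (and higher) bounds for the zero-contribution error $E_j$ simultaneously for all $n$ characters, with explicit dependence on the conductors $M_j\le T$, and this is precisely where Hypothesis $\mathscr{H}_{\alpha,\chi_j}$ is used in its full strength. A secondary difficulty is ensuring the cross-character cancellation $\sum_{p\le x}\chi_i(p)\overline{\chi_j}(p)/p=O(1)$ is uniform in $x$ up to a small power of $T$ and in $M_i,M_j\le T$, which under GRH follows from standard bounds on $\log L(s,\chi_i\overline{\chi_j})$ in a neighbourhood of $s=1$. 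Once these two points are in place, the combinatorics of the moment calculation is exactly that of Selberg's central limit theorem for $\log|\zeta(\tfrac12+it)|$, and the Gaussian limit with the claimed variance follows.
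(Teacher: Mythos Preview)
Your overall strategy---replace $\mathcal{L}(\rho)$ by a short prime sum $\mathcal{P}(\rho)$ using GRH, Hypothesis $\mathscr{D}$, and Hypothesis $\mathscr{H}_{\alpha,\chi_j}$, then show the prime sum is approximately Gaussian with variance $\tfrac12(a_1^2+\dots+a_n^2)\log\log T$ because the cross terms $\sum_{p\le x}\chi_i(p)\overline{\chi_j}(p)/p$ are $O(1)$---is correct and matches the paper in its first half (Lemma~\ref{log L approximate formula}, Proposition~\ref{linear combination error}). The route diverges at the second half. The paper does \emph{not} match moments of $\mathcal{P}(\rho)$ directly to Gaussian moments; instead it computes the characteristic function $\sum_\gamma \exp(2\pi i\omega\,\Re\mathcal{P}_{\mathcal L}(\gamma))$ in terms of the characteristic function of a random model $\Re\mathcal{P}_{\mathcal L}(\underline{\theta})$ (Lemma~\ref{fourier linear combination}), evaluates the latter explicitly as a product of Bessel functions $J_0$, and then feeds this into the Beurling--Selberg approximation of $\mathbbm{1}_{[A,B]}$ to obtain the distribution statement (Proposition~\ref{distr of Re P linear combination}). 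Your proposal and the paper thus reach the Gaussian by genuinely different mechanisms: pure moment matching versus Fourier inversion via Beurling--Selberg. Incidentally, the template in~\cite{log zeta} is also the Fourier/Beurling--Selberg route, not the method of moments as you state.

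One point in your sketch needs correcting if you pursue the moment route. You write that the Landau-type sums $\sum_{0<\gamma\le T}(m/n)^{i\gamma}$ are ``negligible unless $m=n$''. This is not true as stated: Gonek's formula gives, besides the diagonal term $N(T)$ when $m=n$, a contribution $-\tfrac{T}{2\pi}\Lambda(m/n)/\sqrt{m/n}$ (and symmetrically for $n/m$) whenever $m/n$ is a prime power. These secondary terms are exactly the source of the extra sum over $q,\ell$ in the paper's Lemma~\ref{lemma 3.4 linear combination}, and after summing over $q\le X^2$ they total roughly $T\log X$, i.e.\ $\asymp N(T)/k$ rather than an automatic error. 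They \emph{are} smaller than the main Gaussian moment by a factor $\asymp \sqrt{k/\Psi}$ provided $k=o(\Psi)$, so your approach goes through, but this step is where real work happens (it is the analogue of bounding $\mathcal{F}_2$ in Proposition~\ref{distr of Re P linear combination}) and cannot be dismissed as ``lower order'' without argument.
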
 

On the assumption of the GRH and Hypothesis $\mathscr{D}$, and thus without assuming a zero-spacing hypothesis, one can prove a similar theorem for sequences of the form 
\[
(a_1\arg L(\r, \chi_1)+\dots+a_n\arg L(\r, \chi_n)),\quad \text{where} \quad a_1, \dots, a_n \in \mathbb{R},
\]
with a slightly better error term (see Theorem 1.2 in~\cite{log zeta}). 

Moreover, from the discussion of the proof of the above theorem and by using some other ideas, the following theorem will follow.

\begin{thm}\label{independence}
Under the same conditions and with the same notation as in the above theorem, the sequences
         \[
        \bigg(\sdfrac{\log|L(\r, \chi_1)|}{\sqrt {\tfrac12 \log\log T}}\bigg), \dots,\bigg(\sdfrac{\log|L(\r, \chi_n)|}{\sqrt {\tfrac12 \log\log T}}\bigg)
         \]
 are approximately independent as $T\to \infty$. That is, for real numbers $A_1, \dots, A_n$, $B_1, \dots, B_n$ such that $A_j < B_j$ for each $1\leq j\leq n$, we have
        \begin{align*}
        \lim_{T\to\infty } \frac{1}{N(T)}
        \#\bigg\{0<\g \leq T&: \sdfrac{\log|L(\r, \chi_j)|}{\sqrt {\tfrac12 \log\log T}} \in [A_j, B_j] \,\, \text{ for all } \, 1\leq j \leq n \bigg\}  
        \\
        =& \prod_{j=1}^n \lim_{T\to\infty }\frac{1}{N(T)}\#\bigg\{0<\g \leq T: 
        \sdfrac{\log|L(\r, \chi_j)|}{\sqrt {\tfrac12 \log\log T}} \in [A_j, B_j] \bigg\}.
        \end{align*}
\end{thm}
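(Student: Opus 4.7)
The plan is to deduce Theorem~\ref{independence} from Theorem~\ref{distr of linear combination} via the Cram\'er--Wold device, together with the standard fact that a jointly Gaussian vector with identity covariance has independent components. Write
\[
Y_j(\rho) := \frac{\log|L(\rho,\chi_j)|}{\sqrt{\tfrac12 \log\log T}}, \qquad 1\leq j\leq n,
\]
and set $\mathbf{Y}(\rho) = (Y_1(\rho),\dots,Y_n(\rho))$. The goal is to show that the empirical distribution of $\mathbf{Y}(\rho)$ over zeros $0<\gamma\leq T$ converges weakly to the law of a vector of independent $N(0,1)$ random variables.

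First, I would fix an arbitrary $(t_1,\dots,t_n)\in\mathbb{R}^n$, not all zero, and apply Theorem~\ref{distr of linear combination} with the choice $a_j=t_j$. With this choice,
\[
t_1 Y_1(\rho)+\dots+t_n Y_n(\rho) = \frac{a_1\log|L(\rho,\chi_1)|+\dots+a_n\log|L(\rho,\chi_n)|}{\sqrt{\tfrac12 \log\log T}},
\]
and Theorem~\ref{distr of linear combination} says that, after dividing by the additional factor $\sqrt{t_1^2+\dots+t_n^2}$, this scalar quantity is asymptotically standard normal. Equivalently, $\sum_j t_j Y_j(\rho)$ is asymptotically $N(0,\,t_1^2+\dots+t_n^2)$ in distribution over $0<\gamma\leq T$. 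Since convergence in distribution forces convergence of characteristic functions, this yields
\[
\frac{1}{N(T)}\sum_{0<\gamma\leq T}\exp\!\bigl(it_1 Y_1(\rho)+\dots+it_n Y_n(\rho)\bigr) \;\longrightarrow\; \exp\!\Bigl(-\tfrac12(t_1^2+\dots+t_n^2)\Bigr) = \prod_{j=1}^n e^{-t_j^2/2},
\]
the joint characteristic function of $n$ independent standard normals (the case $t_1=\dots=t_n=0$ being trivial).

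Next, I would invoke the L\'evy continuity theorem to conclude that $\mathbf{Y}(\rho)$ converges in distribution to $(Z_1,\dots,Z_n)$ with $Z_j$ i.i.d.\ $N(0,1)$. Because the limit law is absolutely continuous, the Portmanteau theorem upgrades weak convergence to convergence of probabilities on all rectangles, giving
\[
\lim_{T\to\infty}\frac{1}{N(T)}\#\bigl\{0<\gamma\leq T : Y_j(\rho)\in[A_j,B_j]\text{ for all }j\bigr\} = \prod_{j=1}^n \frac{1}{\sqrt{2\pi}}\int_{A_j}^{B_j} e^{-x^2/2}\,dx.
\]
A second application of Theorem~\ref{distr of linear combination}, now with a single nonzero coefficient $a_j=1$, identifies each factor on the right as the limit $\lim_T N(T)^{-1}\#\{0<\gamma\leq T : Y_j(\rho)\in[A_j,B_j]\}$, which is precisely the factorization asserted in Theorem~\ref{independence}.

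This argument is essentially soft, and I do not expect a serious obstacle: the analytic content has already been absorbed into Theorem~\ref{distr of linear combination}. The one point that requires attention is that Theorem~\ref{distr of linear combination} must be applied not just for one fixed coefficient tuple but for \emph{every} $(t_1,\dots,t_n)\in\mathbb{R}^n$, in order to feed the Cram\'er--Wold device. Inspection of the statement confirms that this is allowed, and the uniform error term $O\bigl((\log\log\log T)^2/\sqrt{\log\log T}\bigr)$ in Theorem~\ref{distr of linear combination} would in fact propagate through this argument to yield a quantitative version of Theorem~\ref{independence} should one be required.
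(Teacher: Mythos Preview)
Your argument is correct and takes a genuinely different route from the paper. The paper first passes to the random model: it invokes an (unproved, merely sketched) multivariate analogue of the joint discrepancy result in \cite{LLM}, built from Lemma~\ref{fourier linear combination}, to show that the joint empirical distribution of the $Y_j(\rho)$ over zeros matches the joint law of the normalized random polynomials $X_j=(\Psi/2)^{-1/2}\Re\CMcal{P}_{\chi_j}(\underline{\theta})$; it then argues separately (Corollary~\ref{indep random polyl}) that the $X_j$ are jointly Gaussian with diagonal covariance, hence independent. You bypass the random model entirely: since Theorem~\ref{distr of linear combination} already handles \emph{every} real linear combination, Cram\'er--Wold plus L\'evy continuity gives the joint Gaussian limit with identity covariance in one stroke, and Portmanteau delivers convergence on rectangles. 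Your approach is more economical and fully self-contained given Theorem~\ref{distr of linear combination}; the paper's route, by contrast, isolates the role of the random model and the Fourier/Bessel machinery, which may be of independent interest if one wants effective joint discrepancy bounds. One small caveat on your final remark: the implied constants in Theorem~\ref{distr of linear combination} depend on the coefficients $a_j$ (through $|a_1|+\cdots+|a_n|$ in the moment estimates), so propagating a uniform quantitative error through Cram\'er--Wold would require controlling that dependence---but this does not affect the qualitative independence statement you are asked to prove.
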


According to a result in probability~\cite[p. 120]{GW}, the distribution of a linear combination of independent Gaussian random variables with real coefficients is also Gaussian. Therefore, in view of Theorem \ref{distr of linear combination}, the above theorem is not unexpected.

Further, we remark that by using the same ideas as in the proof of Theorem \ref{independence}, one can also prove that the sequences
         \[
        \bigg(\sdfrac{\log|L(\r, \chi_1)|}{\sqrt {\tfrac12 \log\log T}}\bigg), \bigg(\sdfrac{\arg L(\r, \chi_1)}{\sqrt {\tfrac12 \log\log T}}\bigg)
         \dots,\bigg(\sdfrac{\log|L(\r, \chi_n)|}{\sqrt {\tfrac12 \log\log T}}\bigg), \bigg(\sdfrac{\arg L(\r, \chi_n) }{\sqrt {\tfrac12 \log\log T}}\bigg)
         \]
are approximately independent. 

As an interesting application of Theorem \ref{distr of linear combination}, one can study the proportion of nontrivial zeros $\rho$ of the Riemann zeta-function such that $F(\r)=0$ for
    \[
    F(s)= b_1 L(s, \chi_1)+\dots +b_n L(s, \chi_n),
    \]
where the $b_j$ can be complex numbers. A seminal work about zeros of such functions is that of Bombieri and Hejhal~\cite{BH}. Under suitable hypotheses, they proved that almost all zeros of $F(s)$ lie on the critical line if the $b_j$ are real and if the $L(s, \chi_j)$ satisfy the same functional equation. Their result can be viewed as an application of Selberg's central limit theorem, which can be generalized to state that for $0<t<T$, the function
    \[
    a_1\log|L(\tfrac12+it,\chi_1)|+\dots+a_n\log|L(\tfrac12+it,\chi_n)|,
    \]
where $a_1, \dots, a_n \in \mathbb R$ and $\chi_1, \dots, \chi_n$ are distinct primitive Dirichlet characters, has an approximate Gaussian distribution with mean $0$ and variance $\left(\tfrac12 \sum_{j=1}^n {a_j}^2\right) \log\log T$ as $T\to \infty$. Since Theorem \ref{distr of linear combination} is reminiscent of this version of Selberg's central limit theorem, we can hope to prove that a positive proportion of nontrivial zeros of the zeta-function do not happen to be zeros of $F(s).$ This direction of research will be pursued in an upcoming work.

Our goal is to prove Theorem \ref{distr of linear combination} for $n=2$ and Theorem \ref{independence} for any integer $n\geq 2$. A proof of Theorem \ref{distr of linear combination} for a general integer $n$ would proceed similarly.

Throughout, we assume that both the GRH and Hypothesis $\mathscr{D}$ hold. We take $T$ to be a sufficiently large positive real number. The letters $c$ and $D$ will stand for positive constants that may be different at each occurrence. $c$ stands for an absolute constant, but $D$ may depend on some parameters. The variables $p$ and $q,$ indexed or not, always denote prime numbers, and the variables $j, k, \ell, m$ and $n$ always denote nonnegative integers.


\section*{Acknowledgements}
The author thanks Steven M. Gonek for his guidance and support during the study of this problem.


\section{Preliminaries}\label{sec:preliminaries}

Let $L(s, \chi)$ be the Dirichlet $L$-function corresponding to a primitive Dirichlet character $\chi$ modulo $M$. We put $\mathfrak{a}=0$ if $\chi(-1)=1$, and $\mathfrak{a}=1$ if $\chi(-1)=-1$. Then $L(s, \chi)$ has a trivial zero at $s=-2n-\mathfrak{a}$ for each positive integer $n$.  

Corresponding to such a Dirichlet character, we define 
   \[
   \CMcal{P}_{\chi}(\g):=\sum_{p\leq X^2}\frac{\chi(p)}{p^{1/2+i\g}}.
    \]
The length of this Dirichlet polynomial is denoted by $X^2$ due to our use of the weight function
       \[
       \Lambda_X(n):=\Lambda(n)w_X(n),
        \]
 where    
        \[
        \begin{split}
        w_X(n):=
        \begin{cases}
        1&\quad \text{if} \quad 1\leq n \leq X,\\
         \frac{\log{(X^2/n)}}{\log{X}} &\quad \text{if} \quad  X< n \leq X^2 .
        \end{cases}
        \end{split}
        \]
$X$ will later be chosen suitably. As our first result, we will prove that the real part of $\CMcal{P}_{\chi}(\g)$ approximates $\log|L(\r, \chi)|$. Define
    \[
    \eta_{\chi}(\g):=\min_{\g_\chi} |\g-\g_\chi|,
    \]
where $\g_\chi$ runs over the ordinates of nontrivial zeros $\r_\chi=\frac12+i\g_\chi$ of $L(s, \chi)$. Note that $ \eta_\chi(\g)$ never vanishes due to our assumption of Hypothesis $\mathscr{D}$.

\begin{lem}\label{log L approximate formula}
    Assume that both the GRH and Hypothesis $\mathscr{D}$ hold. Let $4\leq X\leq T^2$. Then 
        \[
        \log|L(\r, \chi)|
        =\Re\CMcal{P}_{\chi}(\g)
        +O\bigg(\sum_{j=1}^{4}r_{\chi}^j(X, \g)\bigg)
        +O\Big(\sdfrac{\log(M|\g|)}{\log X}\Big)+O(1),
        \]
where
        \be\notag
        \begin{split}
        r_{\chi}^1(X, \g)&=\Big|\sum_{p\leq X^2}\sdfrac{(1-w_X(p))\chi(p)}{p^{1/2+i\g}}\Big|, \qquad
        r_{\chi}^2(X, \g)=\Big|\sum_{p\leq X}\sdfrac{w_X(p^2)\chi(p^2)}{p^{1+2i\g}}\Big|, \\
        &r_{\chi}^3(X, \g)=\frac{1}{\log X} \int_{1/2}^{\infty} X^{\frac 1 2-\s}\Big|\sum_{p\leq X^2}\sdfrac{\Lambda_X(p)\chi(p) \log{(Xp)}}{p^{\s+i\g}}\Big|\mathop{d\s}, \\ \qquad
        &\qquad r_{\chi}^4(X, \g)=\bigg(1+\log^+\frac{1}{\eta_{\chi}(\g)\log X}\bigg)\sdfrac{E_\chi(X, \g)}{\log X},
        \end{split}
        \ee
for
        \[
        E_\chi(X, \g) =\Big|\sum_{n\leq X^2} \sdfrac{\Lambda_X(n)\chi(n)}{n^{\s_1+i\g}}\Big|+\log(M|\g|). 
        \]
\end{lem}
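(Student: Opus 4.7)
The plan is to adapt the Selberg-Tsang method for producing an approximate formula for $\log|\zeta(\rho+z)|$ that appears in the author's earlier paper, transplanting it to the Dirichlet $L$-function setting. The key inputs are (i) a Selberg-type explicit formula relating $\log L(s,\chi)$ to a weighted prime-power Dirichlet polynomial and a sum over nontrivial zeros of $L(s,\chi)$, and (ii) careful treatment of the zero of $L(s,\chi)$ closest to $\gamma$.

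To set up the explicit formula, I would begin from the integral representation obtained by multiplying the Dirichlet series for $-L'/L(s,\chi)$ by the kernel $(X^{2w}-X^w)/(w^2\log X)$ and integrating along a vertical line $\Re w=c>1$. The inverse Mellin transform of this kernel produces exactly the weight $w_X(n)$, so the integral equals $\sum_{n}\Lambda_X(n)\chi(n)n^{-s_0}$. Shifting the $w$-contour leftward past the pole at $w=0$ and the simple poles at $w=\rho_\chi-s_0$ coming from nontrivial zeros of $L$, then integrating the resulting identity in $s_0$ along a horizontal line out to $+\infty+i\gamma$, converts $-L'/L$ into $\log L$ and inserts the extra factor $1/\log n$ in the prime sum. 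Using Hypothesis $\mathscr{D}$ to ensure $\rho$ is not an $L$-zero, setting $s_0=\rho$ produces a formula of the shape
\[
\log L(\rho,\chi) = \sum_{n\leq X^2}\frac{\Lambda_X(n)\chi(n)}{n^{\rho}\log n} + \frac{1}{\log X}\sum_{\rho_\chi}K_X(\rho_\chi-\rho) + T_\chi + R_\chi,
\]
where $K_X$ is built from the shifted kernel, $T_\chi$ collects the trivial-zero and archimedean contributions (bounded by $\log(M|\gamma|)/\log X$ via the functional equation), and $R_\chi$ is the residual contour integral on a line close to the critical line. Once parameterized, the $R_\chi$ integral produces $r_\chi^3$, with the weight $\log(Xp)$ arising naturally upon differentiating the kernel in the height variable during the $s_0$-integration.

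Splitting the Dirichlet polynomial $\sum_{n\leq X^2}\Lambda_X(n)\chi(n)/(n^{\rho}\log n)$ into prime, prime-square, and higher prime-power contributions, the higher powers are $O(1)$ by absolute convergence; the prime squares give $r_\chi^2$; and the primes yield $\sum_{p\leq X^2}w_X(p)\chi(p)/p^{\rho}$, which differs from $\mathcal{P}_\chi(\gamma)$ by the quantity inside $r_\chi^1$. Under GRH, each zero-sum summand has modulus $|1-X^{i(\gamma_\chi-\gamma)}|/((\gamma-\gamma_\chi)^2\log X)$. Using $|1-X^{iu}|\ll\min(|u|\log X,1)$, the single nearest zero at distance $\eta_\chi(\gamma)$ produces the factor $1+\log^+(1/(\eta_\chi(\gamma)\log X))$ in $r_\chi^4$, and a dyadic partition in $|\gamma-\gamma_\chi|$ together with the Landau-type bound $N_\chi(t+1)-N_\chi(t)\ll\log(M|t|)$ absorbs the remaining zeros into $E_\chi(X,\gamma)/\log X$. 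Taking real parts throughout yields the claimed formula for $\log|L(\rho,\chi)|$.

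The main obstacle will be the zero-sum bookkeeping: isolating the contribution of the single nearest zero so that it aligns exactly with the $\log^+$ factor in $r_\chi^4$, while uniformly absorbing the remaining zeros into $E_\chi(X,\gamma)/\log X$ across all conductors $M\leq T$. A subtler point is matching the residual contour integral to the specific form of $r_\chi^3$, which requires recognizing that the $\log(Xp)$ factor is the natural weight produced when the kernel's contour is shifted across the line used in the $s_0$-integration.
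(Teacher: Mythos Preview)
Your overall strategy is in the right family, but the route you sketch diverges from the paper's proof in its central device, and one of your key claims is not quite right.

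The paper does \emph{not} integrate the explicit formula for $-L'/L$ all the way from $\rho$ to $+\infty+i\gamma$ and then bound the resulting zero kernel directly. Instead it fixes $\sigma_1=\tfrac12+\tfrac{4}{\log X}$ and splits
\[
\log|L(\rho,\chi)| = -\Re\int_{\sigma_1}^\infty \tfrac{L'}{L}\,d\sigma \;-\; (\sigma_1-\tfrac12)\,\Re\tfrac{L'}{L}(\sigma_1+i\gamma) \;+\; \Re\int_{1/2}^{\sigma_1}\Big(\tfrac{L'}{L}(\sigma_1+i\gamma)-\tfrac{L'}{L}(\sigma+i\gamma)\Big)\,d\sigma = J_1+J_2+J_3.
\]
On $[\sigma_1,\infty)$ the zero sum in the Selberg formula for $L'/L$ is controlled by Selberg's estimate $\sum_{\gamma_\chi}\frac{\sigma_1-1/2}{(\sigma_1-1/2)^2+(t-\gamma_\chi)^2}=O(E_\chi(X,t))$; this gives $J_1=\Re\sum_{n\le X^2}\frac{\Lambda_X(n)\chi(n)}{n^{\sigma_1+i\gamma}\log n}+O(E_\chi/\log X)$ and $J_2\ll E_\chi/\log X$. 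The factor $\log^+(1/(\eta_\chi(\gamma)\log X))$ in $r_\chi^4$ comes entirely from $J_3$: differencing the partial-fraction expansions of $L'/L$ and integrating over $[\tfrac12,\sigma_1]$ produces terms of the form $\tfrac12\log\frac{(\sigma_1-1/2)^2+(\gamma-\gamma_\chi)^2}{(\gamma-\gamma_\chi)^2}$, and it is the nearest-zero term that yields the $\log^+$. That is then paired with the Selberg estimate above, not with a Landau zero-count, to get the factor $E_\chi/\log X$.

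The expression you quote for the zero-sum summand, $|1-X^{i(\gamma_\chi-\gamma)}|/((\gamma-\gamma_\chi)^2\log X)$, is the summand in the $L'/L$ formula evaluated at $s=\rho$ \emph{before} any $s_0$-integration. If you actually integrate the zero term along $[\rho,\infty+i\gamma)$ as you propose, the kernel becomes an exponential-integral in $(\rho_\chi-\rho)\log X$, not that rational expression, and extracting the $\log^+$ factor from it is a separate calculation you have not outlined. Similarly, $r_\chi^3$ in the paper is not a residual contour integral: it appears at the very last step, when the Dirichlet polynomial at abscissa $\sigma_1$ is shifted down to $\tfrac12$ via Selberg's device (this is exactly where the weight $\log(Xp)$ and the damping factor $X^{1/2-\sigma}$ arise). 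Your outline could probably be repaired into a Tsang-style direct argument, but as written it conflates the $L'/L$ and $\log L$ levels and misidentifies the mechanism producing both $r_\chi^3$ and $r_\chi^4$.
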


\begin{proof}
    Let $4\leq X \leq t^2$ and $\s_1=\frac12+\frac{4}{\log X}$. We write $s=\s+it$. 
For $s\neq \r_\chi$ or $(-2n-\mathfrak{a})$ for any positive integer $n$, we have
        \be\label{eq:L' by L}
        \begin{split}
        &\frac{L'}{L}(s, \chi)
        =-\sum_{n\leq X^2}\frac{\Lambda_X(n)\chi(n)}{n^{s}}  \\
        &+\frac{1}{\log X}\sum_{\r_\chi} \frac{X^{\r_\chi-s}(1-X^{\r_\chi-s})}{(\r_\chi-s)^2}
        +\frac{1}{\log X}\sum_{n=1}^\infty \frac{X^{-2n-\mathfrak{a}-s}(1-X^{-2n-\mathfrak{a}-s})}{(2n+\mathfrak{a}+s)^2}.
        \end{split}
        \ee
The proof of this is standard (see Lemma 15 in \cite{SelbergLFncs}). For now, suppose that $\s\geq \s_1$. Then the third term on the right-hand side of \eqref{eq:L' by L} is clearly $\displaystyle \ll \tfrac{1}{\sqrt{X}\log X}$. The second term on the right-hand side is
        \[
        \ll  \sum_{\g_\chi}\frac{\s_1-1/2}{\big(\s_1-1/2\big)^2+(t-\g_\chi)^2}. 
        \]
An important step is to prove that this sum over nontrivial zeros can be estimated in terms of a sum over prime powers. Indeed, it immediately follows from (5.3) and (5.8) of~\cite{SelbergLFncs} that
        \be\label{sum over gamma chi}
        \sum_{\g_{\chi}}\frac{\s_1-1/2}{\big(\s_1-1/2\big)^2+(t-\g_{\chi})^2}
        = O\bigs(E_{\chi}(X, t)\bigs),
        \ee
where 
        \be\notag
        E_{\chi}(X, t)=\Big|\sum_{n\leq X^2}\sdfrac{\Lambda_X(n)\chi(n)}{n^{\s_1+it}}\Big|+\log\big(M(1+|t|)\big).
        \ee
From this and \eqref{eq:L' by L}, we then obtain
        \be \label{eq: L' L}
         \frac{L'}{L}(s,\chi)
        =- \sum_{n\leq X^2}\frac{\Lambda_X(n)\chi(n)}{n^s} 
        +O\bigs(E_{\chi}(X, t)\bigs) \quad \text{for}  \quad \s\geq \s_1.
        \ee
Also, we note the following estimate which holds for any $s$ other than $\r_\chi$~\cite[p. 83]{Davenport}.
        \be\label{L' by L over zero sum}
        \frac{L'}{L}(s, \chi)=\sum_{\r_\chi}\bigg(\frac{1}{s-\r_\chi}-\frac{1}{\r_\chi}\bigg)+O\bigs(\log\big(M(1+|t|)\big)\bigs).
        \ee 
Clearly, we can write
        \be\label{integral L' over L}
        \begin{split}
        \log |L(\r,\chi)|
        &=-\Re\int_{1/2}^{\infty} \frac{L'}{L}(\s+i\g,\chi)\mathop{d\s}  \\
        &=-\Re\int_{\s_1}^{\infty} \frac{L'}{L}(\s+i\g,\chi)\mathop{d\s} 
        -\Big(\s_1-\frac12\Big)\Re \frac{L'}{L}(\s_1+i\g,\chi)  \\ 
        &\qquad \quad 
        +\Re\int_{1/2}^{\s_1} \Big( \frac{L'}{L}(\s_1+i\g,\chi)- \frac{L'}{L}(\s+i\g,\chi) \Big)\mathop{d\s}
         \\ &=J_1+J_2+J_3.
        \end{split}
        \ee
For $J_1$, integrating \eqref{eq: L' L} over $[\s_1,\infty)$ gives
        \[
        J_1
        =\Re \sum_{n\leq X^2} \frac{\Lambda_X(n)\chi(n)}{n^{\s_1+i\g}\log n}
        +O\bigg(\frac{E_{\chi}(X, \g)}{\log X}\bigg).
        \]
Moreover, substituting $\s=\s_1$ in \eqref{eq: L' L} gives
        \[
        J_2
        \ll \frac{1}{\log X}\bigg|\frac{L'}{L}(\s_1+i\g, \chi)\bigg|
        \ll \frac{E_{\chi}(X, \g)}{\log X}.
        \]
We combine these estimates in \eqref{integral L' over L} to have
       \[
       \log|L(\r,\chi)|
       = \Re \sum_{n\leq X^2} \frac{\Lambda_X(n)\chi(n)}{n^{\s_1+i\g}\log n}
       +\Re{J_3}
        +O\bigg(\frac{E_{\chi}(X, \g)}{\log X}\bigg).
       \]
It remains to consider $\Re{J_3}$. By \eqref{L' by L over zero sum}, 
        \be\notag
        \begin{split}
        \Re&\bigg\{\frac{L'}{L}(\s_1+i\g, \chi)-\frac{L'}{L}(\s+i\g, \chi)\bigg\}\\
        &=\sum_{\g_{\chi}} \frac{\s_1-1/2}{\big(\s_1-1/2\big)^2+(\g-\g_{\chi})^2}
        -\sum_{\g_{\chi}} \frac{\s-1/2}{\big(\s-1/2\big)^2+(\g-\g_{\chi})^2} 
        +O\big(\log(M|\g|\big).
        \end{split}
        \ee        
Then, in a similar way to the proof of Lemma 2.1 in ~\cite{log zeta}, one finds that
        \be\notag
        \begin{split}
        \Re \,J_3
         \ll
        \bigg(1  + \log^{+}\Big(\frac{1}{\eta_{\chi}(\g) \log X}\Big)\bigg)  \sum_{\g_\chi}   \frac{(\s_1-1/2)^2}{(\s_1-1/2)^2 +(\g-\g_\chi)^2 }
         +O\bigg(\frac{\log(M|\g|)}{\log X}\bigg).
        \end{split}
        \ee
 Also by \eqref{sum over gamma chi}, the sum in the above is $\ll \frac{E_\chi(X, \g)}{\log X}$. Thus by \eqref{integral L' over L}, we showed that
       \[
        \log|L(\r,\chi)|
        = \Re \sum_{n\leq X^2} \sdfrac{\Lambda_X(n)\chi(n)}{n^{\s_1+i\g}\log n} 
        +O\bigg(\Big(1+\log^+{\frac{1}{\eta_{\chi}(\g)\log X}}\Big)\frac{E_\chi(X, \g)}{\log X}\bigg).
        \]
By using the argument in \cite[p. 35]{S1946Archiv}, one can easily see that the Dirichlet polynomial above is equal to
        \begin{align*} 
         \Re \CMcal{P}_{\chi}(\g)
        &+O\Big(\Big|\sum_{p\leq X^2}\sdfrac{(1-w_X(p))\chi(p)}{p^{1/2+i\g}}\Big|\Big)
        +O\Big(\Big|\sum_{p\leq X}\sdfrac{w_X(p^2)\chi(p^2)}{p^{1+2i\g}}\Big|\Big)\\
        &+O\bigg( \frac{1}{\log X} \int_{1/2}^\infty X^{\frac 1 2-\s}\Big|\sum_{p\leq X^2}\sdfrac{\Lambda_X(p)\chi(p)\log{(Xp)}}{p^{\s+i\g}}\Big|\mathop{d\s}\bigg)+O(1). 
        \end{align*}
From these two statements, the result follows.
   \end{proof}    

Now, let $\chi_1$ and $\chi_2$ be distinct primitive Dirichlet characters with respective conductors $M_1$ and $M_2$. As our notation, we set
     \[
     \mathcal{L}(\rho) := a_1\log|L(\r, \chi_1)|+a_2\log|L(\r, \chi_2)|
     \]
for fixed real numbers $a_1$ and $a_2$. By the above lemma, we conclude that the real part of the Dirichlet polynomial
    \[
    \CMcal{P}_{\mathcal{L}}(\g):= \sum_{p\leq X^2} \frac{a_1 \chi_1(p)+a_2 \chi_2(p)}{p^{1/2+i\g}}
    =a_1 \CMcal{P}_{\chi_1}(\g)+a_2 \CMcal{P}_{\chi_2}(\g)
    \]
approximates $ \mathcal{L}(\rho)$. Indeed, on average, the moments of the distance between them can be bounded as below.
  
     \begin{prop}\label{linear combination error}
Let $k$ be a positive integer, and $\displaystyle T^{\frac{\d}{16k}} \leq X \leq T^{\frac{1}{16k}}$ for a fixed number $0< \d <1$. Further, let $\chi_1, \chi_2$ be primitive Dirichlet characters modulo $M_1, M_2$, respectively, and $M_1, M_2 \leq T$. Assume that both the GRH and Hypothesis $\mathscr{D}$ hold. Also, suppose that both Hypothesis $\mathscr{H}_{\a,{\chi}_1}$ and Hypothesis $\mathscr{H}_{\a,{\chi}_2}$ are true for some $\a\in(0,1]$. Then
    \[
    \sum_{0 < \g \leq T} 
    \bigs( \mathcal{L}(\rho) - \Re\CMcal{P}_{\mathcal{L}}(\g) \bigs)^{2k}
    =O\bigs((Dk)^{4k}N(T)\bigs)
    \]
    for a constant $D$ depending on $\a$ and $\d.$
    \end{prop}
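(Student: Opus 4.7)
The plan is to reduce the $2k$-th moment on the left-hand side to moments of the individual error terms appearing in Lemma~\ref{log L approximate formula}, applied separately to each of $\chi_1$ and $\chi_2$, and then bound each such moment. Writing
\[
\mathcal{L}(\rho) - \Re\CMcal{P}_{\mathcal{L}}(\gamma) = \sum_{j=1}^{2} a_j\bigl(\log|L(\rho,\chi_j)| - \Re\CMcal{P}_{\chi_j}(\gamma)\bigr),
\]
and applying the elementary inequality $(x_1+\dots+x_N)^{2k} \leq N^{2k-1}(x_1^{2k}+\dots+x_N^{2k})$ twice --- once for the linear combination in $j$ and once to separate the four error terms of Lemma~\ref{log L approximate formula} --- the problem reduces to showing
\[
\sum_{0<\gamma\leq T} r_{\chi_j}^i(X,\gamma)^{2k} \ll (Dk)^{4k}\, N(T)
\]
for each $i\in\{1,2,3,4\}$ and $j\in\{1,2\}$. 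The leftover contribution $O(\log(M_j|\gamma|)/\log X) + O(1)$ from Lemma~\ref{log L approximate formula} is harmless because $M_j\leq T$, $|\gamma|\leq T$, and $\log T/\log X \leq 16k/\delta$ by the lower bound $X \geq T^{\delta/(16k)}$.

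For $i\in\{1,2,3\}$, the quantity $r_{\chi_j}^i(X,\gamma)$ is the modulus of a Dirichlet polynomial supported on prime powers up to $X^2$; for $i=3$ this polynomial sits inside an integral against the weight $X^{1/2-\sigma}$ over $[1/2,\infty)$. I would estimate the required $2k$-th moments by pulling the power inside the integral via H\"older's inequality when $i=3$, and then invoking a mean-value theorem for Dirichlet polynomials summed over nontrivial zeros of $\zeta(s)$ --- of the type used by Selberg and by the author in \cite{log zeta} --- together with the standard combinatorial estimate for the $2k$-th power of prime-twisted sums. Since $|\chi_j(p)|\leq 1$, the character twist does not worsen these estimates to leading order, and the resulting bounds fit comfortably within the target $(Dk)^{4k}\,N(T)$.

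The key obstacle, and the only place where Hypothesis $\mathscr{H}_{\alpha,\chi_j}$ enters, is the term $r_{\chi_j}^4$, which carries the weight
\[
W_j(\gamma) := 1 + \log^+\frac{1}{\eta_{\chi_j}(\gamma)\log X},
\]
becoming large precisely when $\gamma$ lies anomalously close to a zero of $L(s,\chi_j)$. I would separate $W_j(\gamma)$ from the factor $E_{\chi_j}(X,\gamma)/\log X$ via Cauchy--Schwarz, reducing matters to bounding $\sum_{0<\gamma\leq T} W_j(\gamma)^{4k}$ and $\sum_{0<\gamma\leq T} E_{\chi_j}(X,\gamma)^{4k}$ separately. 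The $E_{\chi_j}$ moment is handled by the same mean-value reasoning as above. For the $W_j$ moment, I would perform a dyadic decomposition based on the size of $\eta_{\chi_j}(\gamma)\log T$: applying Hypothesis $\mathscr{H}_{\alpha,\chi_j}$ with $C = 2^{-m}$ gives
\[
\#\bigl\{0<\gamma\leq T : \eta_{\chi_j}(\gamma)\log T \leq 2^{-m}\bigr\} \ll 2^{-m\alpha}\, N(T),
\]
while on the $m$-th dyadic shell one has $W_j(\gamma) \ll m + \log(k/\delta)$. Summing the resulting series $\sum_{m\geq 0} 2^{-m\alpha}(m+\log(k/\delta))^{4k}$, whose terms peak near $m \asymp k/\alpha$, yields a factor of order $(Dk)^{4k}$ with $D$ depending on $\alpha$ and $\delta$; combined with the Cauchy--Schwarz step this delivers the required bound.
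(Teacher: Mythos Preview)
Your proposal is correct and follows essentially the same approach as the paper: reduce to the single-character moments $\sum_{0<\gamma\leq T}\bigl(\log|L(\rho,\chi_j)|-\Re\CMcal{P}_{\chi_j}(\gamma)\bigr)^{2k}$ via the power-mean inequality, and then bound each of these by $(Dk)^{4k}N(T)$. The paper carries out this reduction and then simply cites the argument of Proposition~4.2 in~\cite{log zeta} for the individual bounds; your sketch of handling $r_{\chi_j}^1,\dots,r_{\chi_j}^4$ (mean-value estimates for $i=1,2,3$, Cauchy--Schwarz plus a dyadic application of Hypothesis~$\mathscr{H}_{\alpha,\chi_j}$ for $i=4$) is precisely the content of that cited argument.
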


\begin{proof}
Note the well-known inequality
           \[
         \Big(\frac 1r \sum_{j=1}^r x_j \Big)^k \leq \frac{1}{r} \sum_{j=1}^r x_j^k \quad 
       \text{for} \quad x_1, x_2, \dots, x_r>0 \quad \text{and} \quad k\geq 1.
         \]     
By using this, we can write   
    \begin{align*}
     &\sum_{0 < \g \leq T} \bigs(\mathcal{L}(\rho) - \Re \CMcal{P}_{\mathcal{L}}(\g) \bigs)^{2k} \\
     &\ll  
     c^{2k}\sum_{0 < \g \leq T}\bigs( \log|L(\r, \chi_1)|- \Re\CMcal{P}_{\chi_1}(\g) \bigs) ^{2k}
      + c^{2k} \sum_{0 < \g \leq T}\bigs( \log|L(\r, \chi_2)|- \Re{\CMcal{P}_{\chi_2}(\g)} \bigs) ^{2k}. 
    \end{align*}  
From the argument in the proof of Proposition 4.2 in~\cite{log zeta}, it follows that both of the sums on the last line are  $\ll (Dk)^{4k}N(T)$ for a constant $D=D(\a, \d)$. This uses the fact that $|\chi(n)| \leq 1$, and also that the quantity $\eta_{\chi}(\g)$ is similarly defined as the quantity $\eta_\g=\min_{\g' \neq \g} |\g'-\g|$ in~\cite{log zeta}. We also use the bound  
$
\log^{2k}(M_j\g)/(\log X)^{2k} \ll (ck)^{2k} N(T),
$
which follows from our assumption that $M_j \leq T$.
\end{proof}

For our later use, we note the following lemma about the moments of $|\Re\CMcal{P}_{\mathcal{L}}(\g)|$.

\begin{lem}\label{our lemma 3.16 linear combination}
    Let $\displaystyle X\leq T^{\frac{1}{16k}}$, and define $\Psi= \sum_{p\leq X^2} p^{-1}$. We have
    \[
    \sum_{0 < \g \leq T}\big|\Re{\CMcal{P}_{\mathcal{L}}(\g)}\big|^k= O\bigs((ck\Psi)^{k/2}N(T)\bigs). 
    \]
\end{lem}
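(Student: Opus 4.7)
My plan is to treat $\CMcal{P}_{\mathcal{L}}(\g)$ as a single Dirichlet polynomial over primes with coefficients $c_p:=a_1\chi_1(p)+a_2\chi_2(p)$, which satisfy $|c_p|^2\leq 2(a_1^2+a_2^2)$ because $|\chi_j(p)|\leq 1$. Then I will apply the standard moment machinery for Dirichlet polynomials averaged over nontrivial zeros of $\zeta(s)$, essentially repeating the argument that yields the single-character estimate (Lemma 3.16 of \cite{log zeta}) with $c_p$ playing the role of the single coefficient $\chi(p)$.

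First, I would dispose of odd $k$ by Cauchy--Schwarz,
\[
\sum_{0<\g\leq T}\bigl|\Re\CMcal{P}_{\mathcal{L}}(\g)\bigr|^k\leq N(T)^{1/2}\Bigl(\sum_{0<\g\leq T}\bigl|\Re\CMcal{P}_{\mathcal{L}}(\g)\bigr|^{2k}\Bigr)^{1/2},
\]
which reduces the lemma to the case of even exponents at the cost of an absolute constant in $c$. For even $k=2m$ I would use $(\Re z)^{2m}=2^{-2m}\sum_{j=0}^{2m}\binom{2m}{j}z^{j}\bar z^{2m-j}$ to expand
\[
\bigl(\Re\CMcal{P}_{\mathcal{L}}(\g)\bigr)^{2m}=\frac{1}{4^{m}}\sum_{j=0}^{2m}\binom{2m}{j}\sum_{\substack{p_1,\ldots,p_j\\ q_1,\ldots,q_{2m-j}}}\frac{\prod_i c_{p_i}\prod_i\overline{c_{q_i}}}{\sqrt{p_1\cdots p_j\,q_1\cdots q_{2m-j}}}\Bigl(\frac{q_1\cdots q_{2m-j}}{p_1\cdots p_j}\Bigr)^{\!i\g},
\]
and split the average over $\g$ into a diagonal piece (where $p_1\cdots p_j=q_1\cdots q_{2m-j}$) and an off-diagonal piece. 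By unique factorization, the diagonal forces $j=m$ together with a pairing of the two multisets of primes. Counting pairings in the usual way and using the coefficient bound, the diagonal contribution is at most
\[
\binom{2m}{m}\frac{m!}{4^{m}}\,N(T)\,\Bigl(\sum_{p\leq X^2}\frac{|c_p|^2}{p}\Bigr)^{\!m}\ll (cm\Psi)^{m}N(T)=(ck\Psi)^{k/2}N(T),
\]
which is the desired bound.

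For the off-diagonal terms I would use a uniform Landau-type formula for $\sum_{0<\g\leq T}x^{i\g}$ with $x\neq 1$. Because every prime satisfies $p\leq X^2$ and at most $2m=k$ primes appear, every off-diagonal ratio $q_1\cdots q_{2m-j}/p_1\cdots p_j$ lies in $[X^{-4m},X^{4m}]\subset[T^{-1/4},T^{1/4}]$ by the hypothesis $X\leq T^{1/(16k)}$. Summing the Landau bound over all prime tuples then produces a total contribution that is $o(N(T))$, so the diagonal dominates and the lemma follows.

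The main obstacle is the combinatorial bookkeeping of the diagonal once primes are allowed to repeat, and the verification that the off-diagonal explicit-formula bound is uniform in the polynomial's length. Neither issue is new: because only the size $|c_p|\leq|a_1|+|a_2|$ of the coefficient enters the estimates (its precise multiplicative structure is irrelevant), the argument of Lemma 3.16 in \cite{log zeta} transfers line-by-line to the present situation.
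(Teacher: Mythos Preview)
Your proposal is correct, but it takes a longer route than the paper's. The paper's proof simply quotes the single-character bound
\[
\sum_{0<\gamma\leq T}\bigl|\Re\CMcal{P}_{\chi_j}(\gamma)\bigr|^k = O\bigl((ck\Psi)^{k/2}N(T)\bigr)\qquad (j=1,2)
\]
(Lemma 3.2 of \cite{log zeta} together with Stirling's formula) and then deduces the linear-combination bound from the elementary inequality $|a_1x+a_2y|^k \leq 2^{k-1}\bigl(|a_1|^k|x|^k+|a_2|^k|y|^k\bigr)$. You instead treat $\CMcal{P}_{\mathcal{L}}$ as a single Dirichlet polynomial with coefficients $c_p=a_1\chi_1(p)+a_2\chi_2(p)$ and redo the entire diagonal/off-diagonal moment computation from scratch. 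Both arguments are valid and yield the same quality of bound; the paper's is shorter because it reuses the existing lemma as a black box, whereas yours is more self-contained and makes explicit that only the size $|c_p|\leq|a_1|+|a_2|$ of the coefficients matters. One minor remark on your Cauchy--Schwarz reduction for odd $k$: invoking the even case with exponent $2k$ formally requires $X\leq T^{1/(32k)}$, but as you implicitly note, the off-diagonal bound only needs $X^{4k}\leq T^{1/4}$, which already follows from the stated hypothesis $X\leq T^{1/(16k)}$.
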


\begin{proof}
By Lemma 3.2 in~\cite{log zeta} and Stirling's formula, 
    \[
    \sum_{0 < \g \leq T}\big|\Re{\CMcal{P}_{\chi_j}(\g)}\big|^k= O\bigs((ck\Psi)^{k/2}N(T)\bigs) \quad \text{for} \quad j=1, 2.  
    \]
The result then follows easily.
\end{proof}

Another object that we introduce is a random polynomial that approximates the behavior of the Dirichlet polynomial $\CMcal{P}_{\mathcal{L}}(\g)$. Let $(\theta_p )_{p:\text{prime}}$ be a sequence of identically distributed independent random variables each of which is uniformly distributed over $[0, 1]$. It is a well-known idea that the random polynomial
    \[
    \CMcal{P}_{\chi}(\underline{\theta})= \sum_{p\leq X^2}\frac{\chi(p)e^{2\pi i \theta_{p}}}{\sqrt p}.
    \]
can be used to model the distribution of $\CMcal{P}_{\chi}(\g)$. Here, $\Re\CMcal{P}_{\chi}(\underline{\theta})$ has mean $0$ and variance $\frac12 \sum_{p\leq X^2}\frac 1p$. Note that the sum in the latter equals $\log\log X+O(1)$, which is of size $\log\log T$ for suitable $X$. We thus define the quantities 
    \[
    \Psi= \sum_{p\leq X^2} \frac 1p  \quad \text{and} \quad
    \Psi(T)= \sum_{p\leq T} \frac 1p
    \]
to use later. Let an integer $n>1$ have the unique factorization $\displaystyle n=p_1^{\mu_1}\dots p_r^{\mu_r}.$ We set
    \[
    \theta_n:=\mu_1\theta_{p_1}+\dots+\mu_r\theta_{p_r}.
    \]    
It is easy to see that
    \[
    \theta_{mn}=\theta_m+\theta_n \quad  \text{for any } \mkern9mu m, n \in \mathbb{Z}^+ \quad \text{and} \quad \theta_m=\theta_n \quad \text{if and only if} \quad m=n.
    \]    
A useful consequence of this is 
    \be\label{orthogonality}
    \int_0^1 e^{2\pi i\theta_m} e^{-2\pi i\theta_n} \mathop{d\underline{\theta}} = 
    \begin{cases}
    1 &\text{if} \quad m=n, \\
    0 &\text{if} \quad m\neq n,
    \end{cases}
    \ee
where and also from now on, $\int_0^1 (\dots) \mathop{d\underline{\theta}}$ denotes a multidimensional integral.
    
The real part of the following random polynomial mimics the behavior of $\Re\CMcal{P}_{\mathcal{L}}(\g)$. 
\[
\CMcal{P}_{\mathcal{L}}(\underline{\theta}) := \sum_{p\leq X^2}\frac{(a_1\chi_1(p)+a_2\chi_2(p))e^{2\pi i \theta_{p}}}{\sqrt p}
= a_1\CMcal{P}_{\chi_1}(\underline{\theta})+a_2\CMcal{P}_{\chi_2}(\underline{\theta}).
\]
We easily compute its mean.
    \[
    \int_0^1 \Re\CMcal{P}_{\mathcal{L}}(\underline{\theta}) \mathop{d\underline{\theta}} 
    =a_1  \int_0^1 \Big(\Re \sum_{p\leq X^2} \frac{\chi_1(p)e^{2\pi i\theta_p}}{\sqrt p}\Big) \mathop{d\underline{\theta}}
    +a_2 \int_0^1 \Big(\Re \sum_{p\leq X^2} \frac{\chi_2(p)e^{2\pi i\theta_p}}{\sqrt p}\Big) \mathop{d\underline{\theta}}
    =0.
    \]
Its variance equals
    \be\label{compute variance linear combination}
    \begin{split}
    \int_0^1\big(\Re\CMcal{P}_{\mathcal{L}}(\underline{\theta})\big)^2 \mathop{d\underline{\theta}} 
    =&\, \, {a_1}^2 
    \int_0^1 \big(\Re\CMcal{P}_{\chi_1}(\underline{\theta})\bigs)^2 
    \mathop{d\underline{\theta}}
    +{a_2}^2 \int_0^1 \big(\Re\CMcal{P}_{\chi_2}(\underline{\theta})\bigs)^2 
    \mathop{d\underline{\theta}} \\
    &+ 2a_1 a_2 \int_0^1 \Re\CMcal{P}_{\chi_1}(\underline{\theta}) 
    \Re\CMcal{P}_{\chi_2}(\underline{\theta})  \mathop{d\underline{\theta}}. 
    \end{split}
    \ee
We already know that
  \[
   \int_0^1 \big(\Re\CMcal{P}_{\chi_1}(\underline{\theta})\bigs)^2 
    \mathop{d\underline{\theta}}
    = \frac{\Psi}{2}
    = \int_0^1 \big(\Re\CMcal{P}_{\chi_2}(\underline{\theta})\bigs)^2 
    \mathop{d\underline{\theta}}.
  \]
The last term on the right-hand side of \eqref{compute variance linear combination} can be seen to be smaller. Easily,
    \begin{align*}
    \int_0^1\bigs(\Re\CMcal{P}_{\chi_1}(\underline{\theta}) 
    \Re\CMcal{P}_{\chi_2}(\underline{\theta})\bigs)    \mathop{d\underline{\theta}}  
    &= \frac14 \int_0^1 \bigs(\CMcal{P}_{\chi_1}(\underline{\theta}) +
    \overline{\CMcal{P}_{\chi_1}(\underline{\theta})}\bigs)
     \bigs(\CMcal{P}_{\chi_2}(\underline{\theta}) +
    \overline{\CMcal{P}_{\chi_2}(\underline{\theta})}\bigs) 
    \mathop{d\underline{\theta}} \\
    &=  \frac12 \int_0^1 \Re\bigs(\CMcal{P}_{\chi_1}(\underline{\theta}) \CMcal{P}_{\chi_2}(\underline{\theta})\bigs)
     \mathop{d\underline{\theta}}
    +  \frac12 \int_0^1 \Re\bigs(\CMcal{P}_{\chi_1}(\underline{\theta})\overline{\CMcal{P}_{\chi_2}(\underline{\theta})} \, \bigs) \mathop{d\underline{\theta}}. 
    \end{align*}
For the terms on the last line, again by \eqref{orthogonality} we have
    \[
    \Re\int_0^1\CMcal{P}_{\chi_1}(\underline{\theta}) \CMcal{P}_{\chi_2}(\underline{\theta})\mathop{d\underline{\theta}}
      =0,
    \]
and
    \[
    \Re \int_0^1\CMcal{P}_{\chi_1}(\underline{\theta})\overline{\CMcal{P}_{\chi_2}(\underline{\theta})}  \mathop{d\underline{\theta}}
    = \Re\sum_{p\leq X^2} \frac{\chi_1(p) \overline{\chi}_2(p)}{p}.
    \]
Here $\chi_1\overline{\chi_2}$ is a nonprincipal Dirichlet character modulo $\operatorname{lcm}[M_1, M_2]$, and we have
     \[
     \sum_{p\leq X^2} \frac{\chi_1(p)\overline{\chi_2}(p)}{p}
     =\log {L(1, \chi_1\overline{\chi_2})}+O_M(1)
     =O_M(1). 
     \]
Thus by \eqref{compute variance linear combination},
    \be\label{variance linear combination}
      \int_0^1\big(\Re\CMcal{P}_{\mathcal{L}}(\underline{\theta})\big)^2 \mathop{d\underline{\theta}}
    =\frac{{a_1}^2+{a_2}^2}{2}\Psi+O_M(1).
    \ee
Note that for suitable $X$, this is close to $\frac{{a_1}^2+{a_2}^2}{2} \log\log T$, that is, the variance of the approximate Gaussian distribution in the statement of Theorem \ref{distr of linear combination}. 

As a similar result to Lemma \ref{our lemma 3.16 linear combination}, we have 

\begin{lem}\label{Tsang lemma 3.4 linear combination}
    Suppose that $k$ is a nonnegative integer. For $\displaystyle 2\leq X \leq T^{\frac{1}{2k}}$, we have
    \be\notag
    \int_0^1\big|\Re{\CMcal{P}_{\mathcal{L}}(\underline{\theta})}\big|^k\mathop{d{\underline{\theta}}}
    = O\bigs((ck\Psi)^{k/2}\bigs).
    \ee
\end{lem}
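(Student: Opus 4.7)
The plan is to reduce the random linear-combination estimate to the single-character estimates, which were effectively already used in the proof of Lemma \ref{our lemma 3.16 linear combination}. Writing $\Re\CMcal{P}_{\mathcal{L}}(\underline{\theta}) = a_1\Re\CMcal{P}_{\chi_1}(\underline{\theta}) + a_2\Re\CMcal{P}_{\chi_2}(\underline{\theta})$ and applying the convexity inequality $|x_1+x_2|^k \leq 2^{k-1}(|x_1|^k+|x_2|^k)$, I would obtain
\[
\int_0^1 \big|\Re\CMcal{P}_{\mathcal{L}}(\underline{\theta})\big|^k \mathop{d\underline{\theta}} \ll c^k \sum_{j=1}^{2} \int_0^1 \big|\Re\CMcal{P}_{\chi_j}(\underline{\theta})\big|^k \mathop{d\underline{\theta}},
\]
with a constant that absorbs $|a_1|$ and $|a_2|$. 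It then suffices to establish the single-character bound $\int_0^1|\Re\CMcal{P}_{\chi_j}(\underline{\theta})|^k \mathop{d\underline{\theta}} = O((ck\Psi)^{k/2})$ for each $j$.

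For the single-character bound, I would treat even $k=2m$ first. Writing $2\Re\CMcal{P}_{\chi_j}(\underline{\theta}) = \CMcal{P}_{\chi_j}(\underline{\theta}) + \overline{\CMcal{P}_{\chi_j}(\underline{\theta})}$, expanding by the binomial theorem, and invoking the orthogonality relation \eqref{orthogonality} together with the injectivity $\theta_m=\theta_n \Leftrightarrow m=n$, only the balanced term $j=m$ contributes, so
\[
\int_0^1 \big(\Re\CMcal{P}_{\chi_j}(\underline{\theta})\big)^{2m} \mathop{d\underline{\theta}} = \frac{1}{4^m}\binom{2m}{m} \int_0^1 \big|\CMcal{P}_{\chi_j}(\underline{\theta})\big|^{2m}\mathop{d\underline{\theta}}.
\]
Expanding $|\CMcal{P}_{\chi_j}(\underline{\theta})|^{2m}$ and once more applying orthogonality, the integral collapses onto multisets of $m$ primes indexed by integers $n = \prod p^{a_p}$ with $\sum a_p = m$, producing $\sum_n \binom{m}{a_1,\ldots,a_r}^2 |\chi_j(n)|^2/n$. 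Using $|\chi_j(n)|\le 1$ and the crude estimate $\binom{m}{a_1,\dots,a_r}\le m!$, this is bounded by $m!\, \big(\sum_{p\leq X^2}\tfrac{1}{p}\big)^{m} = m!\,\Psi^m$. Stirling's formula then converts $(2m)!\Psi^m/(4^m m!)$ into a bound of the form $(cm\Psi)^m\le (ck\Psi)^{k/2}$.

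For odd $k$, I would simply apply Cauchy--Schwarz, $\int_0^1 |X|^k \mathop{d\underline{\theta}} \le \big(\int_0^1 X^{2k} \mathop{d\underline{\theta}}\big)^{1/2}$, and use the even-exponent case on the right, losing only a harmless constant in $c$. Combining with the initial reduction yields the claimed bound.

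I do not expect a serious obstacle: the argument is the random analog of the zero-sum bound of Lemma \ref{our lemma 3.16 linear combination}, and the combinatorial/Stirling step is the same one used in Tsang's thesis (cf.\ the labeling of the lemma). The only delicate point is ensuring that the hypothesis $X\le T^{1/(2k)}$ is never actually needed on the random side --- it enters only when one later compares moments of $\Re\CMcal{P}_{\mathcal{L}}(\g)$ with those of $\Re\CMcal{P}_{\mathcal{L}}(\underline{\theta})$ --- so the bound above is genuinely unconditional on the probabilistic side.
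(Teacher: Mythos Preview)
Your proposal is correct and follows essentially the same approach as the paper: the paper first states the single-character bound $\int_0^1|\Re\CMcal{P}_{\chi_j}(\underline{\theta})|^k \mathop{d\underline{\theta}} = O((ck\Psi)^{k/2})$ (citing Tsang's thesis and the orthogonality relation \eqref{orthogonality}) and then says the linear-combination case is ``straightforward,'' which is exactly your convexity reduction. You have simply filled in the Tsang-style computation that the paper leaves as a citation, and your observation that the hypothesis $X\le T^{1/(2k)}$ plays no role on the random side is correct.
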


\begin{proof}
As in (3.10) in \cite{Tsang}, one can use \eqref{orthogonality} and easily prove that 
    \be\notag
    \int_0^1\big|\Re{\CMcal{P}_{\chi_j}(\underline{\theta})}\big|^k\mathop{d{\underline{\theta}}}
    = O\bigs((ck\Psi)^{k/2}\bigs) \quad \text{for} \quad j=1, 2. 
    \ee
The claim of the lemma is then straightforward.    
\end{proof}

\section{Proof of Theorem \ref{distr of linear combination}} 

The proof is based on a computation of the Fourier transform of $\Re\CMcal{P}_{\mathcal{L}}(\g)$ in terms of the Fourier transform of $\Re{\CMcal{P}_{\mathcal{L}}(\underline{\theta})}$. As we will see, the latter can be expressed in terms of values of the Bessel functions of the first kind and thus can be computed explicitly. 

As the first step, we express the $k$th moment of $\Re\CMcal{P}_{\mathcal{L}}(\g)$ in terms of the $k$th moment of $\Re{\CMcal{P}_{\mathcal{L}}(\underline{\theta})}$. 
  
\begin{lem}\label{lemma 3.4 linear combination}
    Let $k$ be a positive integer with $k \ll \sqrt{\log T},$ and choose $\displaystyle X\leq T^{\frac{1}{16k}}$. Then
    \be\label{eq:lemma 3.4 linear combination}
    \begin{split}
    \sum_{0 < \g  \leq T} &\bigs(\Re\CMcal{P}_{\mathcal{L}}(\g)\bigs)^k 
    = N(T)\int_0^1 \bigs(\Re\CMcal{P}_{\mathcal{L}}(\underline{\theta})\bigs)^k\mathop{d{\underline{\theta}}}  \\
    &-\frac{T}{\pi}\sum_{q\leq X^2} \sum_{1\leq \ell\leq k} \frac{\log q}{q^{\ell/2}}
    \int_0^1\bigs(\Re\CMcal{P}_{\mathcal{L}}(\underline{\theta})\bigs)^k 
    \Re(e^{2\pi i\ell\theta_q}) \mathop{d{\underline{\theta}}} 
    +O\bigs( (ck)^k\sqrt T\log^3{T}\bigs).
    \end{split}
    \ee
\end{lem}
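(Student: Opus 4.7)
The strategy follows the Selberg--Tsang machinery developed in~\cite{log zeta}: expand the $k$th power of $\Re\CMcal{P}_{\mathcal{L}}(\g)$ as a multiple Dirichlet sum over $k$-tuples of primes, evaluate each resulting inner sum $\sum_{0<\g\le T}x^{i\g}$ via a Landau--Gonek type explicit formula, and re-identify the main pieces as integrals against the random polynomial through the orthogonality relation \eqref{orthogonality}.

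Writing $\Re\CMcal{P}_{\mathcal{L}}(\g)=\tfrac12\bigl(\CMcal{P}_{\mathcal{L}}(\g)+\overline{\CMcal{P}_{\mathcal{L}}(\g)}\bigr)$ and expanding, each term of the expansion has the shape
\[
\frac{1}{2^k}\cdot\frac{\prod_{j=1}^k c_{\sigma_j}(p_j)}{\sqrt{p_1\cdots p_k}}\cdot\Bigl(\frac{m_\sigma}{n_\sigma}\Bigr)^{i\g},
\]
indexed by tuples $(\vec p,\sigma)$ with $p_j\le X^2$ prime and $\sigma_j\in\{\pm1\}$; here $b(p):=a_1\chi_1(p)+a_2\chi_2(p)$, $c_{-}(p)=b(p)$, $c_{+}(p)=\overline{b(p)}$, and $m_\sigma:=\prod_{\sigma_j=+1}p_j$, $n_\sigma:=\prod_{\sigma_j=-1}p_j$. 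An identical expansion applies to $(\Re\CMcal{P}_{\mathcal{L}}(\underline\theta))^k$ with $p_j^{\sigma_j i\g}$ replaced by $e^{2\pi i\sigma_j\theta_{p_j}}$, so by \eqref{orthogonality} the integral $\int_0^1(\Re\CMcal{P}_{\mathcal{L}}(\underline\theta))^k\,d\underline\theta$ equals the subsum over the diagonal $m_\sigma=n_\sigma$, while $\int_0^1(\Re\CMcal{P}_{\mathcal{L}}(\underline\theta))^k\Re(e^{2\pi i\ell\theta_q})\,d\underline\theta$ equals the subsum over those $(\vec p,\sigma)$ with $m_\sigma/n_\sigma=q^{\pm\ell}$.

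Summing over $0<\g\le T$, the diagonal contribution is immediate: $\sum_{\g}1=N(T)$ produces the first term on the right-hand side of \eqref{eq:lemma 3.4 linear combination}. For off-diagonal tuples, I would invoke the Landau--Gonek explicit formula (valid on RH), which for real $x>0$ gives $\sum_{0<\g\le T}x^{i\g}=-\tfrac{T}{2\pi}\Lambda_*(x)+O\bigl(\sqrt x\log^2(xT)\bigr)$, where $\Lambda_*(x)=\Lambda(x)/\sqrt x$ when $x$ is a prime power and vanishes otherwise. Applied to $x=m_\sigma/n_\sigma$ after cancellation of common prime factors, this yields a main term only when the reduced ratio is a prime power $q^{\pm\ell}$ with $q\le X^2$ and $1\le\ell\le k$; pairing the $\pm\ell$ contributions by conjugate symmetry reassembles the factor $\Re(e^{2\pi i\ell\theta_q})$ inside the corresponding integral, with prefactor $-\tfrac{T}{\pi}\tfrac{\log q}{q^{\ell/2}}$, producing the second term of \eqref{eq:lemma 3.4 linear combination}.

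The chief obstacle is the aggregate error analysis. Each off-diagonal term contributes a Landau error of size $O(\sqrt x\log^2(xT))$ with $x\le X^{2k}$, and there are at most $2^k\pi(X^2)^k$ such terms; with $|c_{\sigma_j}(p_j)|\le|a_1|+|a_2|$ the crude total bound is $(ck)^k X^{3k}\log^2T$. Since $X\le T^{1/(16k)}$, this is at most $(ck)^k T^{3/16}\log^2 T$, comfortably absorbed into the claimed $O((ck)^k\sqrt T\log^3 T)$. The delicate bookkeeping lies in confirming that tuples whose reduced $m_\sigma/n_\sigma$ is not a prime power contribute only to the error and not to the main term, and in verifying the error counting uniformly in the range $k\ll\sqrt{\log T}$.
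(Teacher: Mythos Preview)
Your proposal is correct and follows essentially the same route as the paper's proof. The paper organizes the expansion via the binomial theorem, writing $(\Re\CMcal{P}_{\mathcal{L}}(\g))^k=2^{-k}\sum_j\binom{k}{j}\CMcal{P}_{\mathcal{L}}(\g)^j\overline{\CMcal{P}_{\mathcal{L}}(\g)}^{k-j}$, groups the resulting Dirichlet coefficients as $b_j(n)\kappa(n)$, and then applies the bilinear Landau--Gonek formula (Corollary~3.2 of~\cite{log zeta}) directly, deferring the error bookkeeping to the proof of Lemma~5.1 there; your sign-vector parametrisation $(\vec p,\sigma)$ and pointwise application of Landau--Gonek to $x=m_\sigma/n_\sigma$ is an equivalent way of packaging the same computation, and your crude error bound $c^kX^{3k}\log^2 T\le c^kT^{3/16}\log^2 T$ is even a bit sharper than what is needed.
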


\begin{proof}
By the binomial theorem,
        \[
        \sum_{0 < \g \leq T}\bigs(\Re \CMcal{P}_{\mathcal{L}}(\g)\bigs)^k
        =\frac{1}{2^k}\sum_{j=0}^k \binom{k}{j}
        \sum_{0 < \g \leq T}  \CMcal{P}_{\mathcal{L}}(\g)^j 
        \overline{\CMcal{P}_{\mathcal{L}}(\g)}^{k-j}.
        \]
For $1\leq j\leq k$, 
        \be\label{j term linear combination}
        \sum_{0 < \g \leq T}\CMcal{P}_{\mathcal L}(\g)^j 
         \overline{\CMcal{P}_{\mathcal L}(\g)}^{k-j}  
        =\sum_{0 < \g \leq T}\Big(\sum_{p\leq X^2}\sdfrac{a_1\chi_1(p)+a_2\chi_2(p)}{p^{1/2+i\g}}\Big)^j \Big(\overline{\sum_{p\leq X^2}\sdfrac{a_1\chi_1(p)+a_2\chi_2(p)}{p^{1/2+i\g}}}\Big)^{k-j}.
        \ee
We write
     \[
     \CMcal{P}_{\mathcal L}(\g)^j
     =\sum_{\substack{n=p_1\dots p_j,\\ p_i \leq X^2}}  \frac{b_j(n)\kappa(n)}{n^{1/2+i\g}} 
     \quad \text{and}
     \quad
      \CMcal{P}_{\mathcal L}(\g)^{k-j}
     =\sum_{\substack{m=q_1\dots q_{k-j},\\ q_i \leq X^2}}  \frac{b_{k-j}(m)\kappa(m)}{m^{1/2+i\g}} 
     \]   
     Here $p_1,\dots, p_j, q_1,\dots, q_{k-j}$ denote primes that are not necessarily distinct, and the coefficients $b_r(p_{i_1}\dots p_{i_r})$ denotes the number of permutations of the primes $p_{i_1},\dots, p_{i_r}$. We also set
    \[
    \kappa(p_{i_1}\dots p_{i_r}) 
    = \bigs(a_1\chi_1(p_{i_1})+a_2\chi_2(p_{i_1})\bigs) \dots \bigs(a_1\chi_1(p_{i_r})+a_2\chi_2(p_{i_r})\bigs). 
    \] 
Using these notations and Corollary 3.2 in~\cite{log zeta}, we find that \eqref{j term linear combination} is 
         \[
        \begin{split}
        N(T)&\sum_{n} \frac{b_j(n)\kappa(n)}{\sqrt n} \frac{b_{k-j}(n)\overline{\kappa(n)}}{\sqrt n}  \\
        -\frac{T}{2\pi}&\sum_{n}\sum_{m} 
        \frac{b_j(n)\kappa(n)}{\sqrt n}
        \frac{b_{k-j}(m)\overline{\kappa(m)}}{\sqrt m}
        \bigg\{\frac{\Lambda(m/n)}{\sqrt{m/n}}+\frac{\Lambda(n/m)}{\sqrt{n/m}}\bigg\} \\
         &+O\bigg(X^{2k}\log T \log\log T\sum_n\sdfrac{b_j(n)|\kappa(n)|}{n}
         \sum_{n<m}b_{k-j}(m) |\kappa(m)| \bigg) \\
        &+O\bigg(X^{2k}\log T \log\log T
        +\sum_m\sdfrac{b_{k-j}(m) |\kappa(m)|}{m}\sum_{m<n} b_j(n) |\kappa(n)|\bigg) \\
        &+O\bigg(X^{2k}\log^2{T}\Big(\sum_n\sdfrac{b_j(n)^2|\kappa(n)|^2}{n}
        +\sum_m\sdfrac{b_{k-j}(m)^2 |\kappa(m)|^2 }{m}\Big)\bigg).
        \end{split}
        \]
In the above equation, we again have $n=p_1\dots p_j$ and $m=q_1\dots q_{k-j}$. The rest of the proof is similar to that of Lemma 5.1 in~\cite{log zeta}. The main terms in the above will match the main terms on the right-hand side of \eqref{eq:lemma 3.4 linear combination}. Similarly to the estimation of the error terms in the proof of Lemma 5.1, one can use the bound $|\kappa(p_{i_1}\dots p_{i_r})| \leq (|a_1|+|a_2|)^r$, and estimate the error terms in the above.

\end{proof}

We now compute the Fourier transform of $\Re\CMcal{P}_{\mathcal L}(\g)$.

\begin{lem}\label{fourier linear combination}
Let $\Omega=\Psi(T)^2$ where $\Psi(T)=\sum_{p\leq T} \frac1p$. For $\omega\in [0, \Omega]$,
        \be\notag
        \begin{split}
        \sum_{0 < \g \leq T}\exp\bigs( 2\pi i \omega&\Re{\CMcal{P}_{\mathcal L}(\g)}\bigs) 
        = N(T)\int_0^1
         \exp\bigs(2\pi i \omega \Re{\CMcal{P}_{\mathcal L}(\underline{\theta})}\bigs)\mathop{d\underline{\theta}} \\
        &-\frac{T}{\pi}\sum_{q\leq X^2} \sum_{1\leq\ell\leq K} \frac{\log q}{q^{\ell/2}}
        \int_0^1\exp\bigs(2\pi i\omega\Re{\CMcal{P}_{\mathcal L}(\underline{\theta})}\bigs)
        \Re( e^{2\pi i\ell\theta_q})\mathop{d\underline{\theta}} 
        +O\bigg( \frac{N(T)\omega}{2^K}\bigg).
        \end{split}
        \ee
\end{lem}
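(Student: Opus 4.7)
The plan is to Taylor expand $\exp(2\pi i\omega\Re\CMcal{P}_{\mathcal{L}}(\gamma))$ in powers of $\Re\CMcal{P}_{\mathcal{L}}(\gamma)$ up to some degree $K$, apply Lemma \ref{lemma 3.4 linear combination} to each resulting moment, and then re-sum the partial Taylor series so as to recover the exponentials $\exp(2\pi i\omega\Re\CMcal{P}_{\mathcal{L}}(\underline\theta))$ appearing on the right-hand side. Explicitly, I would write $\exp(2\pi i\omega\Re\CMcal{P}_{\mathcal{L}}(\gamma))=\sum_{k=0}^{K}\frac{(2\pi i\omega)^k}{k!}(\Re\CMcal{P}_{\mathcal{L}}(\gamma))^{k}+R_K(\gamma)$ with Taylor remainder $|R_K(\gamma)|\leq (2\pi\omega|\Re\CMcal{P}_{\mathcal{L}}(\gamma)|)^{K+1}/(K+1)!$, sum over $0<\gamma\leq T$, and apply Lemma \ref{lemma 3.4 linear combination} to each of the $K+1$ moments. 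The $N(T)$-terms recombine into $N(T)$ times the partial Taylor sum of the model exponential, and the prime-power terms into the subsidiary sum on $q,\ell$; in both cases I then replace the partial Taylor sums by full exponentials, picking up additional tail errors.

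One technical subtlety is that the inner $\ell$-range in Lemma \ref{lemma 3.4 linear combination} is $1\leq\ell\leq k$, whereas the formula being proved has the $k$-independent range $1\leq\ell\leq K$. I would resolve this by the observation that extending the range is cost-free: if $\ell>k$, then $(\Re\CMcal{P}_{\mathcal{L}}(\underline\theta))^k$ is a polynomial in $\{e^{\pm 2\pi i\theta_p}\}$ of degree at most $k$ in each variable, so the orthogonality relation \eqref{orthogonality} gives $\int_0^1(\Re\CMcal{P}_{\mathcal{L}}(\underline\theta))^k\Re(e^{2\pi i\ell\theta_q})\mathop{d\underline\theta}=0$. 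After this extension, the $k$- and $\ell$-sums can be interchanged, and the inner $k$-summation returns the desired exponential of $\underline\theta$.

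The main obstacle is calibrating $K$ so that all three sources of error collapse into $O(N(T)\omega/2^K)$. These sources are: the tail $\sum_\gamma|R_K(\gamma)|$ from truncating the zero-side expansion, the corresponding remainder of the $\underline\theta$-integrals, and the accumulated contribution $\sum_{k\leq K}\frac{(2\pi\omega)^k}{k!}(ck)^k\sqrt T\log^3T$ of the arithmetic errors from Lemma \ref{lemma 3.4 linear combination}. Applying Lemmas \ref{our lemma 3.16 linear combination} and \ref{Tsang lemma 3.4 linear combination} together with Stirling's formula, the first two tails are bounded by $N(T)\bigs(2\pi e\omega\sqrt{c\Psi/(K+1)}\bigs)^{K+1}$, which is comfortably below $N(T)\omega/2^K$ once $K$ is taken to be a sufficiently large constant multiple of $\omega^2\Psi$. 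The third error can be absorbed by choosing $X$ at the low end of the range $X\leq T^{1/(16K)}$ permitted by Lemma \ref{lemma 3.4 linear combination}, using that $\omega\leq\Psi(T)^2\ll(\log\log T)^2$. The delicate part is that $K$ must also remain $\ll\sqrt{\log T}$ for Lemma \ref{lemma 3.4 linear combination} to apply throughout; checking this is compatible with the range of $\omega$ and the chosen $X$ is the core bookkeeping step.
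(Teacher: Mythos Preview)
Your outline matches the paper's approach closely: Taylor expand, apply Lemma~\ref{lemma 3.4 linear combination} termwise, extend the $\ell$-range by orthogonality, swap the $k$- and $\ell$-sums, and re-sum to exponentials. The error bookkeeping for the zero-side tail $\sum_\gamma |R_K(\gamma)|$, for the $N(T)$-term tail, and for the accumulated $\sqrt{T}\log^3 T$ errors is also right in spirit. (The paper fixes $K=2\lfloor\Psi(T)^6\rfloor$ and $X\leq T^{1/(16\Psi(T)^6)}$, so your condition ``$K$ a large multiple of $\omega^2\Psi$'' is satisfied uniformly for $\omega\leq\Omega=\Psi(T)^2$.)

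There is, however, a genuine gap in your treatment of the prime-power term. After swapping, you must replace
\[
I_\ell(q)=\sum_{0\le k<K}\frac{(2\pi i\omega)^k}{k!}\int_0^1\bigl(\Re\CMcal{P}_{\mathcal L}(\underline\theta)\bigr)^k\Re(e^{2\pi i\ell\theta_q})\,d\underline\theta
\]
by the full exponential integral, and then sum the resulting tail over $q\le X^2$ and $1\le\ell\le K$ with weight $\tfrac{T}{\pi}\cdot\tfrac{\log q}{q^{\ell/2}}$. If you bound the tail only via Lemma~\ref{Tsang lemma 3.4 linear combination} (i.e., by $|\Re(e^{2\pi i\ell\theta_q})|\leq 1$ and the $k$th moment), you get a per-$(q,\ell)$ error of size $\omega/2^K$. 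For $\ell=1$ this must then be summed against $\sum_{q\le X^2}\tfrac{\log q}{\sqrt q}\asymp X$, giving a total error $\asymp TX\omega/2^K$. Since $\log X=\tfrac{\log T}{16\Psi(T)^6}\gg\log\log T$, this is far larger than the target $N(T)\omega/2^K$.

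The paper resolves this by treating the case $\ell=1$ separately. Expanding $(\Re\CMcal{P}_{\mathcal L}(\underline\theta))^k$ via the binomial theorem and applying \eqref{orthogonality}, one sees that $\int_0^1(\Re\CMcal{P}_{\mathcal L}(\underline\theta))^k e^{2\pi i\theta_q}\,d\underline\theta$ vanishes unless one of the primes in the expansion equals $q$; this forces an extra factor of $1/\sqrt{q}$ in the integral, yielding the sharper bound $\ll \tfrac{k}{\sqrt q}(ck\Psi)^{(k-1)/2}$. The tail for $\ell=1$ is then $O(\omega/(2^K\sqrt q))$, and the sum over $q$ becomes $\sum_{q\le X^2}\tfrac{\log q}{q}\ll\log X$, which is acceptable since $T\log X\ll N(T)$. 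You need to include this refinement; the naive moment bound is not enough for $\ell=1$.
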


\begin{proof}
The proof is very similar to that of Lemma 5.4 in~\cite{log zeta}. In order to not be repetitive, we will only mention the important parts of the proof.

The basic idea is to use Lemma \ref{lemma 3.4 linear combination} together with the estimate 
    \be\label{eq:exponential}
    e^{ix}=\sum_{0\leq k<K}\frac{(ix)^k}{k!}+O\bigg(\frac{|x|^K}{K!}\bigg) \quad \text{for} \quad x\in\mathbb{R}.
    \ee   
Note that the above approximation is not good for all $K$, so we set our parameters as follows. Let $\Psi(T)=\sum_{p\leq T} p^{-1}$ as before and
    \be\label{Omega K}
    \Omega=\Psi(T)^2, \quad  K=2\lfloor \Psi(T)^6\rfloor   \quad
     \text{and} \quad X \leq T^{\frac{1}{16\Psi(T)^6}}.
    \ee
By \eqref{eq:exponential}, 
        \be\label{eq:exponential expansion linear combination}
        \begin{split}
        &\sum_{0 < \g \leq T} \exp\bigs(2\pi i \omega\Re{\CMcal{P}_{\mathcal L}(\g)}\bigs) \\
        =&\sum_{0\leq k< K}\frac{(2\pi i \omega)^k}{k!}
        \sum_{0 < \g \leq T}\bigs(\Re{\CMcal{P}_{\mathcal L}(\g)}\bigs)^k
        +O\bigg(\sdfrac{(2\pi \omega)^K}{K!}\sum_{0 < \g \leq T}
        \bigs|\Re{\CMcal{P}_{\mathcal L}(\g)}\bigs|^K\bigg).
        \end{split}
        \ee
 By Lemma \ref{our lemma 3.16 linear combination} and the Stirling's formula, the $O$-term is 
        \[
        \ll N(T)\frac{(2\pi\omega)^K}{K!}(cK\Psi)^{K/2} 
        \ll N(T) \omega \frac{(2\pi e)^K\omega^{K-1} }{K^K}(cK\Psi)^{K/2}
        \ll N(T)\frac{\omega}{2^{K}}.
        \]
Now we apply Lemma \ref{lemma 3.4 linear combination} to the main term on the right-hand side of \eqref{eq:exponential expansion linear combination} and write
        \be\label{eq:fourier linear combination} 
        \begin{split}
        \sum_{0\leq k< K}\frac{(2\pi i \omega)^k}{k!}\sum_{0 < \g \leq T}\bigs(\Re{\CMcal{P}_{\mathcal L}(\g)}\bigs)^k 
        &=N(T)\sum_{0\leq k< K}\frac{(2\pi i \omega)^k}{k!}\int_0^1
         \bigs( \Re{\CMcal{P}_{\mathcal L}(\underline{\theta})}\bigs)^k \mathop{d\underline{\theta}}  \\
        -\frac{T}{\pi}
         \sum_{0\leq k< K}\frac{(2\pi i \omega)^k}{k!} 
        & \sum_{q\leq X^2}  \sum_{1\leq\ell\leq k}
        \frac{\log q}{q^{\ell/2}}\int_0^1
        \bigs( \Re{\CMcal{P}_{\mathcal L}(\underline{\theta})}\bigs)^k 
        \Re( e^{2\pi i\ell\theta_q})\mathop{d{\underline{\theta}}}   \\
         &\quad +O\bigg(\sqrt T\log^3{T}\sum_{1\leq k< K}\frac{(2\pi \omega)^k}{k!}(ck)^k\bigg). \\
        &= S_1+ S_2+S_3 . 
        \end{split}
        \ee
The sum in the above error term starts from $k=1$ because for $k=0$, there is no error term in \eqref{eq:lemma 3.4 linear combination}, that is, the statement of Lemma \ref{lemma 3.4 linear combination}.
    
The estimation of $S_1$ is easy. It can be shown by \eqref{eq:exponential} and \eqref{Omega K} that
        \be\label{S1 fourier linear combination}
        S_1
        =N(T)\int_0^1 \exp\bigs(2\pi i \omega \Re{\CMcal{P}_{\mathcal L}(\underline{\theta})}\bigs) \mathop{d\underline{\theta}} 
        +O\bigg(N(T) \frac{\omega}{2^{K}}\bigg).
        \ee
$S_3$ can be easily estimated by using \eqref{Omega K}. 
        \be\label{S3 fourier linear combination}
        S_3 
        \ll \sqrt{T}\log^3{T} \sum_{1\leq k< K}\frac{(2\pi\omega)^k}{k!}(ck)^k
        \ll  \omega \, \Omega^K\sqrt{T}\log^3{T} \,  e^{cK}
        \ll    \frac{N(T)\omega}{2^K}.
        \ee      
The estimation of $S_2$ in \eqref{eq:fourier linear combination} is more tricky. First, we extend the inner sum in $S_2$ from 
   $1\leq \ell \leq k$ to $1\leq \ell \leq K.$ This can be explained as follows. By the binomial theorem,
        \[
        \int_0^1\bigs( \Re{\CMcal{P}_{\mathcal L}(\underline{\theta})}\bigs)^k e^{2\pi i\ell\theta_q}\mathop{d{\underline{\theta}}} 
        = \frac{1}{2^k} \sum_{j=0}^k \binom{k}{j} \\
       \int_0^1 \CMcal{P}_{\mathcal L}(\underline{\theta})^{j} \overline{\CMcal{P}_{\mathcal L}(\underline{\theta})}^{k-j}
        e^{2\pi i\ell\theta_q} \mathop{d{\underline{\theta}}}.
        \]
Then we use the definition of $\CMcal{P}_{\mathcal L}(\underline{\theta})$, and compare the number of primes by using  \eqref{orthogonality} to see that the above is $0$ unless $k-j=j+\ell$ for some $0\leq j\leq k.$ This implies $1\leq \ell \leq k.$ 
The same is true for the complex conjugate of this integral. Thus, the integral in $S_2$ is $0$ for $\ell> k$ and we can write
    \be\label{S2 fourier linear combination}
    S_2
    =  -\frac{T}{\pi}\sum_{q\leq X^2} \sum_{1\leq\ell\leq K}
     \frac{\log q}{q^{\ell/2}} I_\ell (q),
    \ee
where we let
    \[
    I_\ell (q)= \int_0^1
         \bigg( \sum_{0\leq k< K} 
         \frac{\bigs(2\pi i \omega \Re{\CMcal{P}_{\mathcal L}(\underline{\theta})}\bigs)^k}{k!}  \bigg)
         \Re (e^{2\pi i\ell\theta_q})\mathop{d\underline{\theta}}.
    \]    
We know that the sum over $k$ in $I_\ell(q)$ is approximated by $\exp\bigs(2\pi i\omega \Re{\CMcal{P}_{\mathcal L}(\underline{\theta})}\bigs)$. If $\ell>1$, then 
    \be\label{I_ell linear combination}
    I_\ell(q)= \int_0^1 \exp\bigs(2\pi i\omega \Re{\CMcal{P}_{\mathcal L}(\underline{\theta})}\bigs)
     \Re (e^{2\pi i\ell\theta_q}) \mathop{d\underline{\theta}}
    +O\bigg(\frac{\omega}{2^K}\bigg).
    \ee
This follows easily from \eqref{eq:exponential}, Lemma \ref{Tsang lemma 3.4 linear combination} and our choice of parameters in \eqref{Omega K}. The study of the case of $\ell=1$ is more intricate. We write   
       \be\label{I1+T1 linear combination}
       \begin{split}     
           \int_0^1 \exp\bigs(2\pi i\omega \Re{\CMcal{P}_{\mathcal L}(\underline{\theta})}\bigs)
           \Re(e^{2\pi i  \theta_q})\mathop{d\underline{\theta}} 
            =  I_1(q) +  T_1(q),
        \end{split}
         \ee   
where
    \[
    T_1(q)
     =\sum_{k\geq  K}\frac{(2\pi i \omega)^k}{k!} 
           \int_0^1\bigs( \Re{\CMcal{P}_{\mathcal L}(\underline{\theta})}\bigs)^k 
           \Re(e^{2\pi i\theta_q})\mathop{d{\underline{\theta}}}.
    \]         
We want to show that $T_1(q)$ is small. For this, it suffices to estimate 
    \[
    {T_1}^{'}(q)
       = \sum_{k\geq K}\frac{(2\pi i \omega)^k}{k!} 
           \int_0^1\bigs( \Re{\CMcal{P}_{\mathcal L}(\underline{\theta})}\bigs)^k e^{2\pi i\theta_q}\mathop{d{\underline{\theta}}}.
    \]  
Now, the integral in ${T_1}^{'}(q)$ is equal to
        \begin{align*}
        &\frac{1}{2^k} \sum_{j=0}^k \binom{k}{j}   \\
         \times& \int_0^1 \bigg(\sum_{p\leq X^2}\frac{(a_1\chi_1(p)+a_2\chi_2(p))e^{2\pi i\theta_p}}{\sqrt p}\bigg)^{j}
       \bigg(\overline{\sum_{p\leq X^2}\frac{(a_1\chi_1(p)+a_2\chi_2(p))e^{2\pi i\theta_p}}{\sqrt p}}\bigg)^{k-j}e^{2\pi i\theta_q}
        \mathop{d{\underline{\theta}}}.
        \end{align*}
By \eqref{orthogonality}, the integral on the right-hand side is $0$ unless $j+1=k-j,$ that is, $j=\frac{k-1}{2}.$ In that case, we have $q_1\dots q_{(k+1)/2}=p_1\dots p_{(k-1)/2}\,q\,$ for some primes $p_1,\dots,p_{(k-1)/2}, q_1,\dots, q_{(k+1)/2}.$
Then the above is 
    \[
    \frac{1}{2^k\sqrt q}
    \binom{k}{\frac{k-1}{2}} 
   \sum_{p_1,\dots,p_{(k-1)/2}  \leq X^2}\sdfrac{\varkappa_{(k-1)/2}(p_1\dots p_{(k-1)/2})
   \overline{\varkappa_{(k+1)/2}( p_1\dots p_{(k-1)/2}q)}}{p_1\dots p_{(k-1)/2}},
     \]
   where we set
   \[
   \varkappa_r(p_{i_1}\dots p_{i_r})
   = b_r(p_{i_1}\dots p_{i_r}) (a_1\chi_1(p_{i_1})+a_2\chi_2(p_{i_1}))\dots (a_1\chi_1(p_{i_r})+a_2\chi_2(p_{i_r})),
   \]
   and $b_r(p_{i_1}\dots p_{i_r})$ denotes the number of permutations of primes $p_{i_1},\dots, p_{i_r}$. In particular, $b_{(k-1)/2}( p_1\dots p_{(k-1)/2}) \leq ((k-1)/2)!$. Note that  
     \[
     b_{(k+1)/2}( p_1\dots p_{(k-1)/2}q)
     \leq \frac{(k+1)b_{(k-1)/2}( p_1\dots p_{(k-1)/2})}{2}, 
     \]
 and 
    \[
    \bigs|(a_1\chi_1(p_1)+a_2\chi_2(p_1))\dots (a_1\chi_1(p_{\frac{k-1}{2}})+a_2\chi_2(p_{\frac{k-1}{2}}))\bigs|
     \leq (|a_1|+|a_2|)^{\frac{k-1}{2}}.
    \]
    Using these three bounds, we obtain
    \begin{align*}
     \sum_{p_1,\dots,p_{(k-1)/2}  \leq X^2}& \sdfrac{\varkappa_{(k-1)/2}(p_1\dots p_{(k-1)/2})
   \overline{\varkappa_{(k+1)/2}( p_1\dots p_{(k-1)/2}q)}}{p_1\dots p_{(k-1)/2}} \\
    & \leq k (|a_1|+|a_2|)^{\frac{k-1}{2}} \bigs((k-1)/2\bigs)! \sum_{p_1,\dots,p_{(k-1)/2}  \leq X^2}
    \frac{|\varkappa_{(k-1)/2}(p_1\dots p_{(k-1)/2})|}{p_1\dots p_{(k-1)/2}} \\
    &\leq k(|a_1|+|a_2|)^{k} \bigs((k-1)/2\bigs)! \Big(\sum_{p\leq X^2}\frac{1}{p}\Big)^{(k-1)/2}
    \leq (ck\Psi)^{\frac{k-1}{2}}.
    \end{align*}
    Thus  
     \[
      \int_0^1\bigs( \Re {\CMcal{P}_{\mathcal L}(\underline{\theta})}\bigs)^k e^{2\pi i\theta_q}\mathop{d{\underline{\theta}}}
      \ll \frac{k}{\sqrt q} (ck\Psi)^{\frac{k-1}{2}},
     \]
   and as a consequence,
        \begin{align*}
        {T_1}^{'}(q)
        \ll &\,  \frac1{\sqrt q} \sum_{k\geq K} k \frac{(2\pi \omega )^k}{k!}(ck\Psi)^{\frac{k-1}{2}} 
        \ll \,    \frac{ \omega }{\sqrt q} \sum_{k\geq K} \frac{(2\pi c\, \omega  \sqrt{k\Psi})^{k-1}}{(k-1)!}  
        \ll  \frac{\omega}{2^K \sqrt q}.
        \end{align*}
Similarly, it can be seen that
\[
 \sum_{k\geq K}\frac{(2\pi i \omega)^k}{k!} 
       \int_0^1\bigs( \Re {\CMcal{P}_{\mathcal L}(\underline{\theta})}\bigs)^k e^{-2\pi i\theta_q}\mathop{d{\underline{\theta}}}
       \ll  \omega/(2^K\sqrt q).
\]
Hence $T_1(q) \ll \omega/(2^K\sqrt q).$ Then by \eqref{I1+T1 linear combination},
    \[
     I_1(q) 
     =\int_0^1   \exp\bigs(2\pi i\omega \Re {\CMcal{P}_{\mathcal L}(\underline{\theta})}\bigs)
     \Re(e^{2\pi i  \theta_q})\mathop{d\underline{\theta}} 
    +O\bigg(\frac{\omega}{2^K \sqrt q}\bigg).
     \]
Combining this with \eqref{I_ell linear combination} in  \eqref{S2 fourier linear combination}, we find that
    \[
     S_2  
     =-\frac{T}{\pi} \sum_{q\leq X^2}   \sum_{ 1\leq\ell\leq K}
     \frac{\log q}{q^{\ell/2}} \; 
        \int_0^1   \exp{  \bigs(2\pi i\omega \Re{\CMcal{P}_{\mathcal L}(\underline{\theta})}\bigs)}
        \Re(e^{2\pi i \ell \theta_q}) \mathop{d\underline{\theta}}  
        +O\bigg(\frac{T\omega}{2^K} \sum_{q\leq X^2} \frac{\log q}{q}\bigg).
      \]
The sum in the error term is $\ll \log T,$ so 
     \[
      S_2 
       =-\frac{T}{\pi}  \sum_{q\leq X^2}   \sum_{ 1\leq\ell\leq K}
       \frac{\log q}{q^{\ell/2}} \; 
        \int_0^1 \exp\bigs(2\pi i\omega \Re {\CMcal{P}_{\mathcal L}(\underline{\theta})}\bigs)
        \Re(e^{2\pi i \ell \theta_q})\mathop{d\underline{\theta}} 
        +O\bigg(\frac{N(T)\omega}{2^K}\bigg) .
      \]
We substitute this estimate and the results of \eqref{S1 fourier linear combination} and \eqref{S3 fourier linear combination} into \eqref{eq:fourier linear combination}. The proof is then complete.

 \end{proof}


We now prove that the distribution function of $\Re\CMcal{P}_{\mathcal L}(\g)$ can be expressed in terms of the distribution function of a suitable Gaussian random variable.

\begin{prop}\label{distr of Re P linear combination}
    Let $\displaystyle X = T^{\frac{1}{16(\log\log T)^6}}$. Then for real numbers $A<B$,
    \begin{align*}
    \frac{1}{N(T)}\sum_{0<\g\leq T}  \mathbbm{1}_{[A, B]}
    \bigs(\Re \CMcal{P}_{\mathcal L}(\g)\bigs) 
    =  \frac{1}{\sqrt{2\pi}}
    \int_{A/\sqrt{(\tfrac{a_1^2+a_2^2}{2})\log\log T}}^{B/\sqrt{(\tfrac{a_1^2+a_2^2}{2})\log\log T}}
       e^{-\tfrac{x^2}{2}}\mathop{dx} 
    +O\bigg(\sdfrac{\log\log\log T}{\log\log T}\bigg).
    \end{align*}
\end{prop}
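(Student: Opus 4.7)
The plan is to apply Beurling-Selberg smoothing to Lemma \ref{fourier linear combination}, converting the Fourier identity for $\Re\CMcal{P}_{\mathcal L}(\g)$ into a statement about its distribution function, following the blueprint used for the Riemann zeta-function in \cite{log zeta}. First, with $\Omega = \Psi(T)^2$, construct entire majorants and minorants $f^{\pm}$ of exponential type $2\pi\Omega$ satisfying $f^{-}\leq \mathbbm{1}_{[A,B]}\leq f^{+}$ on $\mathbb R$ and $\int_{\mathbb R}(f^{+}-f^{-})\,dx \ll 1/\Omega$, with $\widehat{f^{\pm}}$ supported in $[-\Omega,\Omega]$. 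Inserting the Fourier inversion formula for $f^{\pm}$, swapping sum and integral, and extending Lemma \ref{fourier linear combination} to negative $\omega$ by complex conjugation (since $\Re\CMcal{P}_{\mathcal L}(\g)$ is real), I obtain
\begin{equation*}
\frac{1}{N(T)}\sum_{0<\g\leq T}f^{\pm}\bigl(\Re\CMcal{P}_{\mathcal L}(\g)\bigr)
= \int_0^1 f^{\pm}\bigl(\Re\CMcal{P}_{\mathcal L}(\underline{\theta})\bigr)\,d\underline{\theta} + \mathcal{E}^{\pm},
\end{equation*}
where $\mathcal{E}^{\pm}$ collects the error terms of Lemma \ref{fourier linear combination} weighted against $\widehat{f^{\pm}}$.

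The probabilistic main term is handled by a central limit theorem for the independent sum $\Re\CMcal{P}_{\mathcal L}(\underline{\theta}) = \sum_{p\leq X^2} \Re\bigl((a_1\chi_1(p)+a_2\chi_2(p))e^{2\pi i\theta_p}\bigr)/\sqrt p$, whose summands are uniformly bounded with mean $0$ and total variance $\tfrac{a_1^2+a_2^2}{2}\Psi + O_M(1)$ by \eqref{variance linear combination}. For the chosen $X$, one has $\Psi = \log\log T + O(\log\log\log T)$, so a Berry--Esseen-type bound (or a direct Edgeworth comparison as in \cite{log zeta}) shows that $\int_0^1 f^{\pm}(\Re\CMcal{P}_{\mathcal L}(\underline{\theta}))\,d\underline{\theta}$ differs from the corresponding Gaussian integral against density $(2\pi\sigma^2)^{-1/2}\exp(-x^2/(2\sigma^2))$, with $\sigma^2 = \tfrac{a_1^2+a_2^2}{2}\log\log T$, by $O(\log\log\log T/\log\log T)$. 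Reverting the Beurling--Selberg sandwich contributes an additional $O(1/\Omega) = O(1/(\log\log T)^2)$, safely within the target error.

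The main obstacle is controlling $\mathcal{E}^{\pm}$, especially its oscillatory double sum over $q\leq X^2$ and $1\leq\ell\leq K$. By independence of the $\theta_p$, the integral
\begin{equation*}
\int_0^1 \exp\bigl(2\pi i\omega\Re\CMcal{P}_{\mathcal L}(\underline{\theta})\bigr)\Re(e^{2\pi i\ell\theta_q})\,d\underline{\theta}
\end{equation*}
factors across primes, and the $\theta_q$-factor reduces to (a linear combination of) Bessel functions of the first kind of order $\ell$ with argument $O(\omega/\sqrt q)$. Using the bounds $|J_{\ell}(x)|\leq (|x|/2)^{\ell}/\ell!$ for small $x$ and $|J_{\ell}(x)|\leq 1$ otherwise, together with $\int_{-\Omega}^{\Omega}|\widehat{f^{\pm}}(\omega)|\,d\omega \ll (B-A) + 1/\Omega$, the resulting prime sum is shown to be of the required order. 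The uniform tail $N(T)\omega/2^K$ is negligible because $K = 2\lfloor\Psi(T)^6\rfloor$. Collecting contributions then yields the proposition.
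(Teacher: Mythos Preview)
Your overall architecture matches the paper's: Beurling--Selberg smoothing, then Lemma~\ref{fourier linear combination}, then factorization of the $\underline\theta$-integral into Bessel functions. Handling the probabilistic main term $\int_0^1 f^{\pm}(\Re\CMcal{P}_{\mathcal L}(\underline\theta))\,d\underline\theta$ by Berry--Esseen is a clean alternative to the paper's explicit computation of $\mathcal F_1$ via Tsang's argument, and it gives the same error.

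There is, however, a real gap in your control of $\mathcal E^{\pm}$, specifically the contribution of the double sum over $q\leq X^2$ and $1\leq\ell\leq K$. The tools you list---$|J_\ell(x)|\leq (|x|/2)^\ell/\ell!$, $|J_\ell(x)|\leq 1$, and an $L^1$ bound on $\widehat{f^{\pm}}$---are not enough. If one bounds the factors over $p\neq q$ trivially by $|J_0|\leq 1$ and uses only the power bound on the $q$-factor, then after summing over $q$ and $\ell\geq 2$ one is left with an integrand of size $\asymp e^{c|\omega|}$ on $|\omega|\leq\Omega$; integrating against $\widehat{f^{\pm}}$ and multiplying by $T$ yields a term of order $T\,e^{c\Omega}=T\,e^{c(\log\log T)^2}$, which dominates $N(T)$. (The $\ell=1$ part alone does survive your crude bound, but $\ell\geq 2$ does not: already $q=2$ contributes $\sum_{\ell}(c\Omega/2)^\ell/\ell!\asymp e^{c\Omega/2}$.) What is missing is the Gaussian decay of the \emph{full} Bessel product: from $J_0(x)=\exp(-x^2/4+O(x^4))$ for small $x$ and $\sum_{p\leq X^2}\nu_p^2/p\asymp\Psi$ one gets
\[
\prod_{\substack{p\leq X^2\\ p\neq q}} J_0\Big(\frac{2\pi\nu_p\omega}{\sqrt p}\Big)\ \ll\ e^{-c\Psi\omega^2}\qquad(\omega\in[0,\Omega]),
\]
and it is precisely this factor that keeps the $\omega$-integral bounded and pushes the secondary term down to $\ll T\,\Omega\log X\ll N(T)/(\log\log T)^4$. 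The paper isolates exactly this estimate for $\mathcal J_\ell$ and uses it in both $\mathcal F_2$ and $\mathcal E_2$; your argument needs the same input. A minor side remark: $\int_{-\Omega}^{\Omega}|\widehat{f^{\pm}}(\omega)|\,d\omega\ll (B-A)+1/\Omega$ is the \emph{pointwise} bound on $\widehat{f^{\pm}}$, not its $L^1$ norm over $[-\Omega,\Omega]$; once the Gaussian decay is in place this slip is harmless.
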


\begin{proof}
The Beurling-Selberg approximation~\cite[p. 213]{Selberg sieves} states that
    \be\label{eq:F}
    \mathbbm{1}_{[A, B]}(x)
    =\frac12F_\Omega(x-A)-\frac12F_\Omega(x-B)
    +O\bigg(\sdfrac{\sin^2(\pi\Omega(x-A))}{(\pi\Omega(x-A))^2}\bigg)
    +O\bigg(\sdfrac{\sin^2(\pi\Omega(x-B))}{(\pi\Omega(x-B))^2}\bigg),
    \ee
where 
    \[
    F_{\Omega}(x)
    =\Im  \int_0^\Omega G\Big(\frac \omega \Omega\Big)\exp{(2\pi ix\omega)}\frac{\mathop{d\omega}}{\omega}
    \]
with
    \[
    G(u)= \frac{2u}{\pi}+2u(1-u)\cot{(\pi u)} \quad \text{for} \quad u\in[0, 1]. 
    \]   
To recall, we repeat the choice of parameters $\Omega, K$ and $X$ in \eqref{Omega K},
\[
  \Omega=\Psi(T)^2, \quad  K=2\lfloor \Psi(T)^6\rfloor   \quad
     \text{and} \quad X \leq T^{\frac{1}{16\Psi(T)^6}}
\]
and set  $\Psi_{\mathcal{L}}=(a_1^2+a_2^2)\Psi$ where $\Psi=\sum_{p\leq X^2} p^{-1}$. Our plan is to show that
        \be\label{eq:sum F linear combination}
        \,\mathcal F
        :=\sum_{0 < \g \leq T}F_\Omega\big(\Re\CMcal{P}_{\mathcal L}(\g)-A\big) 
        =\frac{N(T)}{\sqrt{2\pi}}\int_{-\infty}^\infty\sgn{\bigg(x-\frac{A}{\sqrt{\frac12\Psi_{\mathcal L}}}\bigg)}e^{-\tfrac{x^2}{2}}\mathop{dx}
        +O\bigg(\frac{N(T)}{{\Psi_{\mathcal{L}}}^2}\bigg),
        \ee
  and 
        \be\label{eq:sum sin linear combination}
        \mathcal{E}
        :=\sum_{0 < \g \leq T}\frac{\sin^2\big(\pi \Omega(\Re{\CMcal{P}_{\mathcal L}(\g)}-A)\big)}
        {\big(\pi \Omega(\Re{\CMcal{P}_{\mathcal L}(\g)}-A)\big)^2} 
        =O\bigg( \frac{N(T)}{{\Psi_{\mathcal{L}}}^2}\bigg).
        \ee
These results remain valid when $A$ is replaced with $B$. Then by \eqref{eq:F}, it will follow that
        \[
         \sum_{0<\g\leq T}\mathbbm{1}_{[A, B]}
         \bigs(\Re\CMcal{P}_{\mathcal L}(\g)\bigs)
          =\frac{N(T)}{\sqrt{2\pi}}
          \int_{A/\sqrt{\frac12\Psi_{\mathcal L}}}^{B/\sqrt{\frac12\Psi_{\mathcal L}}} 
     e^{-\tfrac{x^2}{2}}\mathop{dx}
          +O\bigg(\frac{N(T)}{{\Psi_{\mathcal{L}}}^2}\bigg).
         \]
Here, for our choice of $X$, 
        \[
        \Psi_{\mathcal L}=(a_1^2+a_2^2)\Psi+O(1)=(a_1^2+a_2^2)\log\log T+O\bigs(\log\log\log T\bigs). 
        \] 
Thus the integral in the previous statement equals
       \[
       \frac{N(T)}{\sqrt{2\pi}}
       \int_{A/\sqrt{(\frac{a_1^2+a_2^2}{2})\log\log T}}^{B/\sqrt{(\frac{a_1^2+a_2^2}{2})\log\log T}}
       e^{-\tfrac{x^2}{2}}\mathop{dx}+O\bigg(N(T)\sdfrac{\log\log\log T}{\log\log T}\bigg). 
       \]
 Since 
       $\displaystyle
       \tfrac{N(T)}{{\Psi_{\mathcal{L}}}^2}
       \ll N(T)\sdfrac{\log\log\log T}{\log\log T},
       $
the proof of the proposition will then be complete. 
       
We start with the definition in \eqref{eq:F} and write
        \[
        \mathcal F
        =\Im \int_0^ \Omega G\Big(\frac{\omega}{\Omega}\Big)e^{-2\pi i A\omega}\sum_{0 < \g \leq T}
        \exp\bigs(2\pi i\omega\Re\CMcal{P}_{\mathcal L}(\g)\bigs)\frac{\mathop{d\omega}}{\omega}. 
        \]
 By Lemma \ref{fourier linear combination}, we write
        \be\label{F term linear combination}
        \begin{split}
        \mathcal F
        =&\, N(T)  \Im  \int_0^\Omega G\Big(\frac{\omega}{\Omega}\Big)e^{-2\pi i A\omega}
        \int_0^1\exp\bigs(2\pi i\omega \Re \CMcal{P}_{\mathcal L}(\underline{\theta})\bigs)\mathop{d\underline{\theta}}
        \frac{\mathop{d\omega}}{\omega}  \\
        &-\sdfrac{T}{\pi}\sum_{\substack{q\leq X^2, \\ 1\leq\ell \leq K}}
        \sdfrac{\log q}{q^{\ell/2}}
         \Im  \int_0^{\Omega}G\Big(\sdfrac{\omega}{\Omega}\Big)e^{-2\pi iA\omega}
        \int_0^1\exp\bigs(2\pi i\omega\Re{\CMcal{P}_{\mathcal L}(\underline{\theta})}\bigs) 
        \Re(e^{2\pi i\ell\theta_q})\mathop{d\underline{\theta}}\frac{\mathop{d\omega}}{\omega} \\
         &+O\bigg(\sdfrac{N(T)}{2^K}\int_0^\Omega \Big| G\Big(\frac{\omega}{\Omega}\Big)e^{-2\pi iA\omega}\Big| \mathop{d\omega} \bigg) 
        =\, \mathcal F_1+\mathcal F_2+O\bigg(N(T)  \frac{\Omega}{2^K} \bigg).
        \end{split}    
        \ee
Here, we used the boundedness of the function $G$ on $[0, 1]$ to estimate the $O$-term. To prove \eqref{eq:sum sin linear combination}, we start with the identity 
        \[
        \frac{\sin^2(\pi \Omega x)}{(\pi \Omega x)^2}
        =\frac{2}{\Omega^2}\int_0^{\Omega}(\Omega-\omega)
        \cos(2\pi x\omega)\mathop{d\omega},
        \]
and write
        \[
        \mathcal E
        =\frac{2}{\Omega^2}\int_0^{\Omega}(\Omega-\omega)
        \Re{\sum_{0 < \g \leq T}\exp\bigs(2\pi i\omega(\Re\CMcal{P}_{\mathcal L}(\g)-A)\bigs)}\mathop{d\omega}.
        \]
 Again by Lemma \ref{fourier linear combination}, 
        \be\label{sin term linear combination}
        \begin{split}
        \mathcal E
        \ll & \,  \frac{N(T)}{\Omega^2}\int_0^\Omega (\Omega-\omega)\bigg|    \int_0^1
        \exp\bigs(2\pi i\omega\Re{\CMcal{P}_{\mathcal L}(\underline{\theta})}\bigs)
        \mathop{d\underline{\theta}}\bigg|\mathop{d\omega}  \\
        & +\frac{T}{\Omega^2}\sum_{q\leq X^2} \sum_{1\leq\ell \leq K}
        \frac{\log q}{q^{\ell/2}}\int_0^\Omega (\Omega-\omega)\bigg|
         \int_0^1\exp\bigs(2\pi i\omega\Re{\CMcal{P}_{\mathcal L}(\underline{\theta})}\bigs) 
         \Re(e^{2\pi i\ell\theta_q})\mathop{d\underline{\theta}}
         \bigg|\mathop{d\omega}   \\
        &+ \, N(T)\frac{\Omega}{2^K} 
        = \, {\mathcal E}_1+{\mathcal E}_2+O\bigg(N(T)\frac{\Omega}{2^K}\bigg) .
        \end{split}   
        \ee
Note that
    \begin{align*}
    2 \int_0^1 \exp\bigs(2\pi i\omega\Re&{\CMcal{P}_{\mathcal L}(\underline{\theta})}\bigs) 
      \Re(e^{2\pi i\ell\theta_q})\mathop{d\underline{\theta}}       \\
    =   &\int_0^1\exp\bigs(2\pi i\omega\Re{\CMcal{P}_{\mathcal L}(\underline{\theta})}\bigs) 
     e^{2\pi i\ell\theta_q}\mathop{d\underline{\theta}} 
     +\int_0^1\exp\bigs(2\pi i\omega\Re{\CMcal{P}_{\mathcal L}(\underline{\theta})}\bigs) 
     e^{-2\pi i\ell\theta_q}\mathop{d\underline{\theta}}.
    \end{align*}
Observe that all of $\mathcal F_1,  \mathcal F_2, \mathcal S_1$ and $\mathcal S_2$ include two types of integrals over $\underline{\theta}$ and one integral is a special case of the other for $\ell=0$. We will now compute these integrals. Define
    \[
     {\mathcal J}_\ell
     :=  \int_0^1\exp\bigs(2\pi i\omega\Re{\CMcal{P}_{\mathcal L}(\underline{\theta})}\bigs)
      e^{2\pi i\ell\theta_q}\mathop{d\underline{\theta}}.
    \]
In the definition of $\Re\CMcal{P}_{\mathcal L}(\underline{\theta})$, let
   \[
   \nu_p=|a_1\chi_1(p)+a_2\chi_2(p)| \quad \text{and} \quad 
   a_1\chi_1(p)+a_2\chi_2(p)=  \nu_p e^{2\pi i\b_p} \quad \text{for} \quad  0\leq \b_p <1.
   \] 
If $\nu_p=0,$ then we set $\b_p=0$. By using the independence of $(\theta_p)$, we see that
        \begin{align*}
        {\mathcal J}_\ell
        =\int_0^1 \exp\bigg(2\pi i\Big(\omega\sdfrac{\nu_q\cos(2\pi(\theta_q+\b_q))}{\sqrt q}+\ell\theta_q\Big)&\bigg)\mathop{d{\theta_q}} \\
        \cdot \prod_{\substack{p\leq X^2\\ p\neq q}} &\int_0^1 
        \exp\Big(2\pi i\omega\sdfrac{\nu_p\cos(2\pi(\theta_p+\b_p))}{\sqrt p}\Big)\mathop{d{\theta_p}}.
        \end{align*}      
Recall that for $\ell \in \mathbb{Z}^+$, the Bessel function of integer order $\ell$ is given by
    \be\label{Jell integral}
    J_\ell(z)
    =(-i)^\ell\int_0^1 \exp\bigg(2\pi i \Big(\sdfrac{z\cos(2\pi\theta)}{2\pi}+\ell\theta\Big)\bigg)\mathop{d\theta}.
    \ee          
After a change of variable, the integral in ${\mathcal J}_\ell$ that corresponds to a prime $p\neq q$ is seen to equal
     \[
    J_0\Big(\frac{2\pi\omega \nu_p}{\sqrt p}\Big).
     \]
Via the change of variable $\theta_q \to \theta_q-\b_q$, the integral that corresponds to the prime $q$ equals
       \[
        \int_{-\b_q}^{1-\b_q}
         \exp\bigg(2\pi i\Big(\omega \nu_q\frac{\cos(2\pi\theta_q)}{\sqrt q}+\ell\big(\theta_q-\b_q\big)\Big)\bigg)
        \mathop{d{\theta_q}}.
        \]
Since the integrand has period $1$, it follows from \eqref{Jell integral} that this is
        \[
         (ie^{-2\pi i \b_q})^\ell J_{\ell}\Big(\frac{2\pi \nu_q \omega}{\sqrt q}\Big).
        \]     
 By multiplying the expressions for the integrals, we obtain for $\ell\in \mathbb{Z}^+$
        \be\label{J linear combination}
        {\mathcal J}_\ell
        =(ie^{-2\pi i \b_q})^\ell J_{\ell}\Big(\frac{2\pi  \nu_q \omega}{\sqrt q}\Big)
        \prod_{\substack{p\leq X^2,\\p\neq q}}J_0\Big(\frac{2\pi \nu_p \omega}{\sqrt p}\Big).
        \ee       
Since $J_{\ell}(z)=(-1)^\ell J_{\ell}(z)$, for $\ell\in \mathbb{Z}^+$ we also have
        \[
        {\mathcal J}_{-\ell}
        =(-ie^{-2\pi i \b_q})^\ell J_{\ell}\Big(\frac{2\pi  \nu_q \omega}{\sqrt q}\Big)
        \prod_{\substack{p\leq X^2,\\p\neq q}}J_0\Big(\frac{2\pi \nu_p \omega}{\sqrt p}\Big).
        \]
By using the bound $\nu_p \leq |a_1|+|a_2|$ and the argument in \cite[p. 34]{log zeta}, we find that if $\ell \in \mathbb{Z}$ and $\omega \in[0, \Omega]$, then
       \be\label{product Bessels 2 linear combination}
        {\mathcal J}_{\ell}
       \ll \frac{((|a_1|+|a_2|)\pi\omega)^{|\ell|}}{|\ell| !q^{|\ell|/2}}  e^{-c\Psi\omega^2+\sqrt{2}\pi(|a_1|+|a_2|)\omega}
       \ll \frac{((|a_1|+|a_2|)\pi\omega)^{|\ell|}}{|\ell| !q^{|\ell|/2}}  e^{-c\Psi\omega^2}.
        \ee        
Thus
     \[
     {\mathcal E}_1      
     \ll   \frac{N(T)}{\Omega^2}\int_0^\Omega
     (\Omega-\omega)e^{-c\Psi\omega^2}\mathop{d\omega} 
     \leq \frac{N(T)}{\Omega }\int_0^\infty e^{-c\Psi\omega^2}\mathop{d\omega} 
     \ll  \frac{N(T)}{\Omega}.
      \]
We use this bound to estimate ${\mathcal E}_2$ in \eqref{sin term linear combination} as
        \begin{align*}
        {\mathcal E}_2
        \ll &\,  \frac{T}{\Omega^2}\sum_{1\leq\ell \leq K} \frac{(\pi(|a_1|+|a_2|))^\ell}{\ell!}
        \sum_{q\leq X^2}  \frac{\log q}{q^{\ell}}\int_0^\Omega
        (\Omega-\omega)\omega^\ell e^{-c\Psi\omega^2}\mathop{d\omega} \\
        \ll &\, \frac{T}{\Omega} \sum_{q\leq X^2}  \frac{\log q}{q}
       \int_0^\Omega   \bigg( \sum_{1\leq\ell \leq K} \frac{(\pi(|a_1|+|a_2|)\omega)^\ell}{\ell!}\bigg)
       e^{-c\Psi\omega^2}\mathop{d\omega} \\
        \ll &\, \frac{T}{\Omega}\sum_{q\leq X^2}  \frac{\log q}{q}
        \int_0^\Omega e^{-c\Psi\omega^2+(|a_1|+|a_2|)\pi\omega}\mathop{d\omega}
        \ll \frac{T\log X}{\Omega \sqrt \Psi}\ll  \frac{N(T)}{\Omega \sqrt \Psi}.
        \end{align*} 
Combining our estimates for  $\mathcal E_1$ and $\mathcal E_2$ in \eqref{sin term linear combination}, we obtain
       \[
        \mathcal E
        \ll \frac{N(T)}{\Omega \sqrt \Psi} + \frac{N(T) \Omega}{2^K}.
        \]
By \eqref{Omega K}, this is $\ll N(T)/{\Psi_{\mathcal{L}}}^2$, so we obtain \eqref{eq:sum sin linear combination}. We proceed with the proof of \eqref{eq:sum F linear combination}. By \eqref{J linear combination},
        \[
        \mathcal F_1
        = N(T)\Im\int_0^{\Omega}G\Big(\frac{\omega}{\Omega}\Big)e^{-2\pi iA\omega}\prod_{p\leq X^2}J_0\Big(\frac{2\pi \nu_p \omega}{\sqrt p}\Big)\frac{\mathop{d\omega}}{\omega}.
        \]
Similarly to a result of Tsang's (see~\cite[pp. 34--35]{Tsang}), one can prove that      
         \[
        {\mathcal F}_1
        =\frac{N(T)}{\sqrt{2\pi}} \int_{-\infty}^\infty 
        \sgn\bigg(x-\sdfrac{A}{\sqrt{\frac12\Psi_{\mathcal L}}}\bigg)e^{-\tfrac{x^2}{2}}\mathop{dx}
        +\, O\bigg(\frac{N(T)}{{\Psi_{\mathcal{L}}}^2}\bigg).
        \]
We see that  
        \begin{align*}
        {\mathcal F}_2
         =&-\frac{T}{2\pi}\sum_{q\leq X^2}\sum_{1\leq\ell \leq K} \frac{\log q}{q^{\ell/2}}
         \Im \int_0^\Omega G\Big(\frac{\omega}{\Omega}\Big)e^{-2\pi iA\omega}
          \bigs({\mathcal J}_\ell +{\mathcal J}_{-\ell} \bigs) \frac{\mathop{d\omega}}{\omega}.
         \end{align*} 
Now by \eqref{product Bessels 2 linear combination}, 
        \[
        \begin{split}
        &- \frac{T}{2\pi} \sum_{q\leq X^2}\sum_{1\leq\ell \leq K} 
       \frac{\log q}{q^{\ell/2}}
       \;  \Im \int_0^\Omega G\Big(\frac{\omega}{\Omega}\Big)e^{-2\pi iA\omega}\,
       {\mathcal J}_\ell \frac{\mathop{d\omega}}{\omega} \\
       \ll& \, \, T \sum_{\substack{q\leq X^2,\\ 1\leq\ell\leq K}}
        \frac{(\pi(|a_1|+|a_2|)) ^\ell \log q}{\ell! q^{\ell}}
         \int_0^\Omega G\Big(\frac{\omega}{\Omega}\Big)
        \omega^{\ell-1} e^{-c\Psi\omega^2} \mathop{d\omega}.
        \end{split}
        \]      
Since $G$ is bounded on $[0, 1],$ the above is
        \begin{align*}  
        & \ll \, T   \sum_{q\leq X^2}\frac{\log q}{q} 
        \sum_{1\leq\ell \leq K}  \frac{(\pi(|a_1|+|a_2|))^\ell}{\ell!}
         \int_0^\Omega \omega^{\ell-1}
         e^{-c\Psi\omega^2}\mathop{d\omega} \\
      &\ll    T   \sum_{q\leq X^2}\frac{\log q}{q}  
        \int_0^\Omega \sum_{1 \leq \ell \leq K} 
        \frac{(\pi(|a_1|+|a_2|) \omega)^{\ell-1} }{(\ell-1)!} e^{-c\Psi\omega^2}\mathop{d\omega}  \\
      & \ll   T   \sum_{q\leq X^2}\frac{\log q}{q}  
      \int_0^\Omega  e^{-c\Psi\omega^2+\pi(|a_1|+|a_2|)\omega}\mathop{d\omega}.   
      \end{align*}  
The integrand is bounded and the sum over $q$ is $\ll \log X,$ so by \eqref{Omega K}, the above is
        \[
         \ll T\, \Omega \log X
         \ll \frac{N(T)}{(\log\log T)^4}.
        \]     
Similarly, 
    \[
    - \frac{T}{2\pi} 
        \sum_{q\leq X^2}\sum_{1\leq\ell \leq K}  \frac{\log q}{q^{\ell/2}}
       \;  \Im \int_0^\Omega G\Big(\frac{\omega}{\Omega}\Big)e^{-2\pi iA\omega}\,
       {\mathcal J}_{-\ell} \frac{\mathop{d\omega}}{\omega} 
    \ll  \frac{N(T)}{(\log\log T)^4}.
    \]
Combining our estimates for ${\mathcal F}_1$ and  ${\mathcal F}_2$, we obtain \eqref{eq:sum F linear combination}.
     \end{proof}

  The proof of Theorem \ref{distr of linear combination} can now be easily completed  by following the argument in Section 5.5 of~\cite{log zeta} and by using Propositions \ref{linear combination error} and \ref{distr of Re P linear combination}. We thus contend with some remarks. First, one defines a remainder function corresponding to the situation where the Dirichlet polynomial $\CMcal{P}_{\mathcal{L}}(X,\g)$ has length $X^2$. Let
    \[
    r(X,\g)=\mathcal{L}(\r)-\Re\CMcal{P}_{\mathcal{L}}(X, \g),
    \]
where $\displaystyle X=T^{\frac{1}{16(\log\log T)^6}}$ and $\CMcal{P}_{\mathcal{L}}(X,\g)=\sum_{p\leq X^2} \frac{a_1\chi_1(p)+a_2\chi_2(p)}{p^{1/2+i\g}}$. 

For $\displaystyle Y= T^{\tfrac{1}{16k}}$ with $k=\lfloor \log{\Psi(T)}\rfloor$, one similarly defines another remainder $r(Y,\g)$ corresponding to the real part of a polynomial with length $Y^2$, that is, $\Re \CMcal{P}_{\mathcal{L}}(Y, \g).$ Note that due to the choice of $Y$, one can use Proposition \ref{linear combination error} and bound moments of $r(Y,\g).$ Since  $r(Y,\g)$ and $r(X, \g)$ are sufficiently close to each other, this can be used to bound moments of $r(X, \g).$ Via Chebyshev's inequality, one can then show that the distribution function of $\mathcal{L}(\r)$ is close to that of $\Re \CMcal{P}_{\mathcal{L}}(X, \g).$ Finally, one uses Proposition \ref{distr of Re P linear combination}, which states that the distribution function of $\Re\CMcal{P}_{\mathcal{L}}(X, \g)$ is approximately Gaussian. A careful analysis provides the rate of convergence to the Gaussian distribution function as in the statement of Theorem \ref{distr of linear combination}.

\section{Proof of Theorem \ref{independence}}

We use an argument similar to the one in~\cite{Hsu Wong}. First, let us recall some concepts and results from probability.

 \begin{defn*}
A vector $(X_1, \dots, X_n)$ of real-valued random variables is said to have an $n$-variate Gaussian distribution if for all vectors $\bm{a}\in\mathbb{R}^n,$ the scalar product $\bm{a}\cdot \bm{X}$ has a Gaussian distribution. 
 \end{defn*}

 
 \begin{prop}
    Let $X_1, \dots, X_n$ be random variables that have an $n$-variate Gaussian distribution. Then $X_1, \dots, X_n$ are independent if and only if $\var(X_j+X_k)=\var(X_j)+\var(X_k)$ for all $1\leq j< k \leq n.$
 \end{prop}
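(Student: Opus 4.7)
The plan is to treat the two directions separately, with the content concentrated in the reverse implication. The forward direction is purely algebraic and does not require the Gaussian hypothesis: if $X_1,\dots,X_n$ are independent then $\operatorname{Cov}(X_j,X_k)=0$ for $j\neq k$, and the identity $\var(X_j+X_k)=\var(X_j)+\var(X_k)+2\operatorname{Cov}(X_j,X_k)$ gives the stated variance additivity. Conversely, that same identity shows the hypothesis is equivalent to $\operatorname{Cov}(X_j,X_k)=0$ for all $j\neq k$, so it remains only to prove that an uncorrelated Gaussian vector is independent.

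For the reverse direction I would work with the joint characteristic function
\[
\phi_{\bm X}(\bm t)=E\bigl[e^{i\,\bm t\cdot\bm X}\bigr].
\]
The definition of $n$-variate Gaussian stated just above the proposition guarantees that $Y:=\bm t\cdot\bm X$ is univariate Gaussian for every $\bm t\in\mathbb R^n$, with mean $\mu_Y=\sum_j t_j\mu_j$ (where $\mu_j=E[X_j]$) and variance $\sigma_Y^2=\sum_{j,k}t_jt_k\operatorname{Cov}(X_j,X_k)$. Consequently $\phi_{\bm X}(\bm t)=E[e^{iY}]=\exp\bigl(i\mu_Y-\tfrac12\sigma_Y^2\bigr)$. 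Under the hypothesis the covariance matrix is diagonal, so $\sigma_Y^2=\sum_j t_j^2\sigma_j^2$ with $\sigma_j^2=\var(X_j)$, and the exponent separates across coordinates. Each marginal $X_j$ is itself Gaussian with mean $\mu_j$ and variance $\sigma_j^2$ (apply the definition with $\bm t$ equal to the $j$-th standard basis vector), so
\[
\phi_{\bm X}(\bm t)=\prod_{j=1}^n\exp\!\Big(it_j\mu_j-\tfrac12 t_j^2\sigma_j^2\Big)=\prod_{j=1}^n\phi_{X_j}(t_j).
\]

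The argument is then completed by the uniqueness theorem for characteristic functions: a joint characteristic function that factors as a product of marginal characteristic functions determines the product of the marginal laws, hence $X_1,\dots,X_n$ are independent. There is essentially no technical obstacle, but two points warrant a line of care: one must observe that the definition of multivariate Gaussian forces each $X_j$ to be a univariate Gaussian, so that the marginal characteristic functions have the expected exponential form; and one must note that pairwise variance additivity for every pair $(j,k)$ genuinely yields a diagonal (not merely block-diagonal) covariance matrix, which is precisely what allows the exponent to factor completely across the $n$ coordinates.
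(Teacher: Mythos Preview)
Your proof is correct. The paper itself does not give a proof of this proposition but simply refers the reader to a probability textbook (Grimmett--Welsh, p.~101); your argument via the joint characteristic function is the standard one and is almost certainly what appears there, so there is no meaningful divergence to compare.
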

 
 \begin{proof}\let\qed\relax
 See ~\cite[p. 101]{GW}.
 \end{proof}

 \begin{cor}\label{indep random polyl}
Let $\chi_1, \dots, \chi_n$ be distinct primitive Dirichlet characters and $\Psi=\sum_{p\leq X^2} p^{-1}$. In the notation of the above proposition, let 
     \[
     X_j=\frac{1}{\sqrt{\Psi/2}}\Re \CMcal{P}_{\chi_j}(\underline{\theta})
     =\frac{1}{\sqrt{\Psi/2}} \Re \sum_{p\leq X^2}\frac{\chi_j(p) e^{2\pi i\theta_p}}{\sqrt p} \quad \text{for} \quad 1\leq j \leq n.
     \]
Then $X_1, \dots, X_n$ are independent. 
 \end{cor}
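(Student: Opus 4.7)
The strategy is to verify both hypotheses of the preceding proposition asymptotically: that the normalized vector $(X_1,\ldots,X_n)$ has an $n$-variate Gaussian distribution in the limit $X\to\infty$, and that $\var(X_j+X_k)\to\var(X_j)+\var(X_k)$ for each pair $j<k$.

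For the first step, I would show that every linear combination $\bm c\cdot\bm X=\sum_{j=1}^n c_j X_j$ is asymptotically Gaussian. Writing
\[
\bm c\cdot\bm X=\frac{1}{\sqrt{\Psi/2}}\,\Re\sum_{p\le X^2}\frac{\sum_{j} c_j\chi_j(p)}{\sqrt p}\,e^{2\pi i\theta_p},
\]
this is a sum of independent, bounded, mean-zero random variables, so the Lindeberg CLT yields asymptotic normality: the Lindeberg condition $\max_{p\le X^2}|\sum_j c_j\chi_j(p)|/\sqrt p=o(\sqrt\Psi)$ is immediate from the prime number theorem since $\Psi\to\infty$. Alternatively, one can mimic the Fourier/moment computation behind Proposition \ref{distr of Re P linear combination}, restricted to the $\int_0^1(\cdot)\mathop{d\underline\theta}$ side, to reach the same conclusion together with the explicit limit mean $0$ and limit variance $c_1^2+\cdots+c_n^2$, the off-diagonal contributions being $O_M(1/\Psi)$ exactly as in \eqref{variance linear combination}.

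For the second step, the normalization gives $\var(X_j)=1$ directly from $\int_0^1(\Re\CMcal{P}_{\chi_j}(\underline\theta))^2\mathop{d\underline\theta}=\Psi/2$. For $j\ne k$, the same computation that produced \eqref{variance linear combination} with $a_1=a_2=1$ applied to the pair of characters $(\chi_j,\chi_k)$ yields
\[
\int_0^1\bigl(\Re\CMcal{P}_{\chi_j}(\underline\theta)+\Re\CMcal{P}_{\chi_k}(\underline\theta)\bigr)^2\mathop{d\underline\theta}=\Psi+O_M(1),
\]
the crucial input being that $\chi_j\overline{\chi_k}$ is a nonprincipal Dirichlet character modulo $\operatorname{lcm}[M_j,M_k]$, so that $\sum_{p\le X^2}\chi_j(p)\overline{\chi_k}(p)/p=\log L(1,\chi_j\overline{\chi_k})+O_M(1)=O_M(1)$. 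Dividing by $\Psi/2$, we obtain $\var(X_j+X_k)=2+O_M(1/\Psi)\to 2=\var(X_j)+\var(X_k)$.

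With both hypotheses verified in the limit, the preceding proposition applied to the $n$-variate Gaussian limit of $(X_1,\ldots,X_n)$ yields independence of its components, which is the asserted (asymptotic) independence of $X_1,\ldots,X_n$. The main point to handle carefully is the passage from the asymptotic statements \emph{joint distribution is $n$-variate Gaussian} and \emph{$\var(X_j+X_k)-\var(X_j)-\var(X_k)\to 0$} to an application of the cited proposition, which is phrased for exact Gaussians with exact additivity of variances; this reduces to the standard observation that convergence in distribution of the joint vector, together with vanishing of the limit off-diagonal covariances, forces the limit law to be a product of its marginals, so the $X_j$ are independent in the limit.
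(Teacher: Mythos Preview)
Your proposal is correct and follows essentially the same route as the paper: invoke the classical central limit theorem (the paper just says ``the central limit theorem from probability'') together with the variance computation \eqref{variance linear combination} to see that every real linear combination $\bm a\cdot\bm X$ is asymptotically Gaussian, and then apply the preceding proposition, the variance-additivity hypothesis being exactly the $O_M(1)$ cross term in \eqref{variance linear combination}. You are more explicit than the paper about the asymptotic nature of the statement and the passage from approximate to exact Gaussianity, which the paper leaves implicit.
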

 
 \begin{proof}
By using a generalization of \eqref{variance linear combination} and the central limit theorem from probability, we see that $\bm{a}\cdot \bm{X}$ has a Gaussian distribution for any real vector $\bm{a}\in \mathbb{R}^n$. Thus $(X_1, \dots, X_n)$ has an $n$-variate Gaussian distribution. The claim then follows from the above proposition. 
 \end{proof}

 \begin{proof}[Proof of Theorem \ref{independence}]\let\qed\relax
For convenience, we again let
     \[
     X_j=\frac{1}{\sqrt{\Psi/2}} \Re \CMcal{P}_{\chi_j}(\underline{\theta})
    \quad \text{for} \quad 1\leq j \leq n.
     \]
One can prove an analogue of Theorem 1.1 in~\cite{LLM} by using Lemma \ref{fourier linear combination} to prove that
    \begin{align*}
    \lim_{T\to\infty } \frac{1}{N(T)}
    \#\bigg\{0<\g \leq T&: \frac{\log|L(\r, \chi_j)|}{\sqrt {\tfrac12 \log\log T}} \in [A_j, B_j] \,\, \text{ for each } \, 1\leq j \leq n \bigg\}  \\
    =\lim_{T\to\infty } 
    &\mathbb{P} \bigs( X_j\in [A_j, B_j] \,\, \text{ for each } \, 1\leq j \leq n \bigs).
    \end{align*}
Corollary \ref{indep random polyl} implies that the right-hand side equals
    \[
    \lim_{T\to\infty}\prod_{j=1}^n 
    \mathbb{P} \big( X_j\in [A_j, B_j] \big).
    \]
Also by Theorem \ref{distr of linear combination} with $a_1=1$ and $a_2=0$, for each $1 \leq j \leq n$
    \[
    \lim_{T\to\infty }\mathbb{P} \big( X_j\in [A_j, B_j] \big)
    =\lim_{T\to\infty } \frac{1}{N(T)}
    \#\bigg\{0<\g \leq T: \frac{\log|L(\r, \chi_j)|}{\sqrt {\tfrac12 \log\log T}} \in [A_j, B_j]  \bigg\}.
    \]
By combining these two statements, we complete the proof.
 \end{proof}


\end{document}